\renewcommand{\cite}[1]{\citep{#1}}
\begin{document}
	
\title{Provably Sample-Efficient Model-Free Algorithm for MDPs with Peak Constraints}
	
\author{\name Qinbo Bai \email bai113@purdue.edu \\
		\addr School of Electrical and Computer Engineering\\
		Purdue University\\
		West Lafayette, IN 47907, USA
		\AND
		\name Vaneet Aggarwal\email vaneet@purdue.edu \\
		\addr School of IE and ECE\\
		Purdue University\\
		West Lafayette, IN 47907, USA
		\AND
		\name Ather Gattami \email ather.gattami@ai.se \\
		\addr AI Sweden\\
		Stockholm, Sweden}
	
\editor{}
	
\maketitle

\begin{abstract}
	In the optimization of dynamic systems, the variables typically have constraints. Such problems can be modeled as a Constrained Markov Decision Process (CMDP). This paper considers the peak Constrained Markov Decision Process (PCMDP), where the agent chooses the policy to maximize total reward in the finite horizon as well as satisfy constraints at each epoch with probability 1. We propose a model-free algorithm that converts PCMDP problem to an unconstrained problem and a Q-learning based approach is applied. We define the concept of probably approximately correct (PAC) to the proposed PCMDP problem. The proposed algorithm is proved to achieve an  $(\epsilon,p)$-PAC policy when the episode $K\geq\Omega(\frac{I^2H^6SA\ell}{\epsilon^2})$, where $S$ and $A$ are the number of states and actions, respectively. $H$ is the number of epochs per episode. $I$ is the number of constraint functions, and $\ell=\log(\frac{SAT}{p})$. We note that this is the first result on PAC kind of analysis for  PCMDP with peak constraints, where the transition dynamics are not known apriori. We demonstrate the proposed algorithm on an energy harvesting problem and a single machine scheduling problem, where it performs close to the theoretical upper bound of the studied optimization problem.
\end{abstract}
\section{Introduction}

Optimization of dynamic systems typically has constraints, e.g., battery capacity for robots. As an example, if a robot is powered by a battery, which is also being charged with an external power supply, the amount of energy used at each time is limited by the battery capacity. The dynamical systems are typically modeled as a Markov Decision Process (MDP), while the transition probabilities may not be known apriori (or maybe dynamic). In the absence of knowledge of transition probabilities, the MDP is modeled as a Reinforcement Learning (RL) problem which aims to maximize the total reward in the finite horizon by making actions given the state of the process to be controlled. RL algorithms can be divided into model-based and model-free, where the model-based approaches estimate the transition probabilities, while model-free approaches do not. In this paper, we consider a model-free approach to RL in the presence of peak constraints, which is an important constraint in many dynamical systems. For instance, algorithms with peak constraints have been studied for communications \cite{shamai1995capacity}, flow-shop scheduling \cite{fang2013flow}, thermostatically-controlled systems \cite{karmakar2013coordinated}, economics \cite{bailey1972peak}, robotics \cite{li1997new}, etc. The constrained optimization problems have been considered for Markov Decision Processes \cite{altman1999constrained}. However, these require complete knowledge of the transition probabilities. Without such knowledge, algorithms have been proposed \cite{geibel2005risk,geibel2006reinforcement}. However, to the best of our knowledge, none of the algorithms so far has considered MDPs with peak constraints and provably given PAC analysis for objective and constraint violations. 

{\bf Contributions: }  In this work,  we assume that the transition probability is unknown, the reward and constraint functions can be observed but are not known in closed form. We extend the concept of probably approximately correct (PAC) for the proposed PCMDP problem. We introduce an \textbf{Approximated PCMDP} to propose a novel model-free algorithm with stochastic policy. The proposed algorithm is shown to achieve an $(\epsilon,p)$-PAC policy when the number of episodes are  $K\geq\Omega(\frac{I^2H^6SA\ell}{\epsilon^2})$. We also show that a deterministic variant of the proposed algorithm, while converges to the optimal, may not have optimal convergence rates. We conjecture that a deterministic policy does not achieve the optimal convergence rate, while validating that is left for the future. Finally, the proposed algorithm is evaluated on an energy-harvesting transmitter studied in \cite{wang2014power} and a single machine scheduling problem with deadlines studied in \cite{KOULAMAS2001447}. It is found that the proposed algorithm performs close to the genie-aided upper bound for the problem.

\section{Related Work}

\textbf{Online Convex Optimization (OCO)}: OCO problem is an extension of the constrained convex optimization. In this problem, we wish to optimize $\sum_{t=1}^{T}f_t(\bm{x})$ for given functions $f_t$, $t\in \{1, \cdots, T\}$, such that $\bm{x}\in\mathcal{K}$. In online convex optimization, we select $\bm{x}_t$ at time $t$, such that the regret in objective is minimized, which is defined as 

\begin{equation}
	\textup{Regret}(T)=\sum_{t=1}^{T}f_t(\bm{x}_t)-\min\limits_{\bm{x}\in\mathcal{K}}\sum_{t=1}^{T}f_t(\bm{x}). 
\end{equation}

Further, $\bm{x}_t$ may not satisfy constraints, and thus there will be a constraint violation. 
By changing the problem into an online convex-concave optimization problem, 
The authors of \cite{OCOLTC} proposed an algorithm which achieves the $O(\sqrt{T})$ bound for the regret and $O(T^{3/4})$ bound on the violation of constraints. 
Further, they proposed another algorithm based on the mirror-prox method \citep{nemirovski2004prox} that achieves $O(T^{2/3})$ bound on both regret and constraints when the domain can be described by a finite number of linear constraints. 
The authors of \cite{jenatton2016adaptive} proposed an algorithm which achieves $O(T^{\max(\beta,1-\beta)})$ objective regret and $O(T^{1-\beta/2})$ constraint violations for $\beta\in (0,1)$. 
Further, the authors of \citep{OCOFCV} proposed an algorithm with $O(\sqrt{T})$  regret bound for objective with finite constraint violations. However, CMDP is different from OCO because the reward function depends both on the state and action and the previous action can influence current state and thus change the reward function. Further, the functions and constraints are not known explicitly in reinforcement learning (RL). Thus, the analysis of CMDP doesn't directly follow from that of OCO.

\textbf{Constrained Markov Decision Process (CMDP)}: When the system model (the transition probability distribution, the reward function, and constraint functions) is known, the problem is generally considered as CMDP. CMDP in the form of discounted and average reward has been  studied in \cite{altman1999constrained}. It is well known that CMDP problem is convex and can be converted into an equivalent unconstrained MDP problem by using the method of Lagrange multipliers. Thus, when the model is known, CMDP can be solved using linear programming (LP). In addition to the LP method, Three different algorithms, WeiMDP, AugMDP, and RecMDP, are proposed to solve CMDP in different settings \cite{geibel2006reinforcement}. Constrained Upper Confidence Reinforcement Learning (C-UCRL) \cite{zheng2020constrained} algorithm achieves $O(T^{3/4}\sqrt{\log(T/\delta)})$ regret on the reward and satisfies the constrains even during learning process with probability at least $1-\delta$. 

The key difference between these works and reinforcement learning (RL) approach is that the transition probabilities for the next state given the previous state and action are assumed to be known in CMDP approaches, while are not known apriori in RL approaches. They may be learnt in model-based RL, while not learnt at all in model-free RL approaches. Recently, the authors of \cite{efroni2020explorationexploitation} proposed OptCMDP, OptCMDP-bonus, OptDual-CMDP, and OptPrimalDual-CMDP algorithms to achieve both $O(\sqrt{T})$ bound on the reward and constraint violations. However, all of these four algorithms are model-based. A model-free algorithm is provided in \cite{ding2020provably} and also achieve $\mathcal{O}(\sqrt{T})$ bound on both the objective and constraint violation. In this paper, we consider model-free reinforcement learning based approaches for the peak constraint problem which is different from the above paper in the average expected setting. 

\textbf{Regret Bounds for Reinforcement Learning}: Regret Analysis for the Reinforcement Learning has been considered for both the model-based approaches \cite{jaksch2010near,agrawal2017optimistic,Azar,kakade2018variance} and the model-free approaches \cite{kearns2002near,strehl2006pac,jinchi}. Our paper extends the epsiodic reinforcement learning setup with the addition of peak constraints. 

\textbf{Model-free Reinforcement Learning Algorithm for CMDP with Peak Constraints}: Q-learning based methods with peak constraints has been studied \cite{bouton,wang2014power}, where the Q function in each epoch is projected to the constraint set. These algorithms involve knowledge of constraint functions explicitly (since projection to the constraint set is needed) to make decisions at each time. In contrast, we do not require knowledge of constraint function.  Recently, based on the primal-dual method, The authors of \cite{PRIMALDUAL} proposed an algorithm with policy descent and showed that the algorithm is $1-\delta$ safe ( $P(\cap_{t\geq0}\{s_t\in\mathcal{S}_0\}\vert\pi_{\theta})\geq1-\delta$, where $\mathcal{S}_0$ is the safe region). Besides, \cite{ather} related PCMDP to unconstrained zero-sum game where the objective is the Lagrangian of the optimization problem, and applied max-min Q-learning to PCMDP to prove convergence. However, none of the works in this direction have given a PAC kind of analysis for objectives and constraints, which is the focus of our paper. To the best of our knowledge, this paper provides  the first PAC analysis for model-free reinforcement learning with peak constraints.

\section{Problem Formulation and Assumption}

We consider an episodic setting of the PCMDP with finite state and action space, defined by PCMDP $(\mathcal{S},\mathcal{A},H,\mathbb{P},r, f_i, s_1)$, where $\mathcal{S}$ is the state space with $\vert\mathcal{S}\vert=S$, $\mathcal{A}$ is the set of actions with $\vert\mathcal{A}\vert=A>1$, $H$ is the number of epochs in each episode, and $\mathbb{P}$ is the transition matrix such that $\mathbb{P}_h(\cdot\vert s,a)$ gives the probability distribution over next state based on the state and action pair $(s,a)$ at epoch $h$. Further, $r:\mathcal{S}\times\mathcal{A}\rightarrow\mathbb{R}$ is the deterministic reward function and $f_i:\mathcal{S}\times\mathcal{A}\rightarrow\mathbb{R}$,  $i=1, \cdots, I$, are peak constraint functions. $s_1$ is a fixed initial state. In the RL setting, the transition dynamics $\mathbb{P}_h$, the reward function $r$ and constraint functions $f_i$ are unknown to the agent but can be measured when a state action pair $(s,a)$ is observed. If we know the model of MDP (which means that the transition dynamics, the reward and constraint functions are known), we can solve the problem by solving the optimal Bellman Equation.
	\begin{equation}
	\tilde{Q}_h^{*}(s,a)=r_h(s,a)+[\mathbb{P}_h\tilde{V}_{h+1}^*(s,a)]\quad \tilde{V}_{h}^*(s)=\max_{a\in\mathcal{A}_h^{safe}(s)}\tilde{Q}_h^*(s,a)
	\end{equation}
	where $\tilde{Q}(s,a)$ is the state-action value function for reward function $r(s,a)$ such that 
	\begin{equation}
		\tilde{Q}_h^{\pi}(s,a)=\mathbf{E}[\sum_{h'=h+1}^{H}r_{h'}(s_{h'},\pi_{h'}s_{h'})|s_h=s,a_h=a]
	\end{equation} 
	and $\mathcal{A}_h^{safe}(s)=\{a: f_i(s,a)\geq 0,\forall i\in[I]\}$. As a result, the problem can be considered as an unconstrained MDP problem with specified action set for each $s\in\mathcal{S}$ and $h\in[H]$. In this paper, we make the following assumptions. 
\vspace{0.2in}
\begin{assumption}\label{ass_bounded}
	The absolute values of the reward function $r$ and constraint functions $f_i,i=1,\cdots,I$ are strictly bounded by a constant known to the agent. Without loss of generality, we let this constant be $1$. 
\end{assumption}
\vspace{0.2in}
\begin{assumption}\label{ass_pos}
	The values of the reward function $r$ is non-negative, i.e., $0\leq r(s,a)\leq 1,\forall(s,a)$. %
\end{assumption}
\vspace{0.2in}
These assumptions on reward function are typical in reinforcement learning \cite{jinchi,mengdi,Azar}, and the bound of reward function can be normalized. Further, the reward can be shifted up by adding a constant to make the reward function non-negative. 

\begin{remark}\label{remark_initial}(Nonidentical Initial State)
	Despite that the MDP model is defined with a fixed initial state for all episodes, the result in this paper still works for random initial state with a simple modification. Denote the distribution for the initial state as $d$ such that $s_1\sim d$. Then, we artificially add an extra state $s_0$ and define $r(s_0,a)=0,\forall a\in\mathcal{A}$, $f_i(s_0,a)=0,\forall a\in\mathcal{A},\forall i\in[I]$ and $\mathbb{P}_0(s_1\vert s_0,a)=d(s_1),\forall a\in\mathcal{A}$. Also, we modify $\mathbb{P}_h(s_0\vert s,a)=0,\forall (s,a)\in\mathcal{S}\times\mathcal{A}$. By this modification, $s_0$ is considered as a dummy state, which will not influence future epochs. Thus, the proposed algorithm can start from $s_0$ in this setting. Thus, a fixed initial state is without loss of generality. 
\end{remark}

We define the policy as a function that maps a state $s\in\mathcal{S}$ to a probability distribution of the actions with a probability assigned to each action $a\in\mathcal{A}$. In an episodic setting, the  policy $\bm{\pi}$ is a collection of $H$ policy functions $\pi_h$ at each epoch, that is $\pi_h(s)=a$ with probability $\mathbf{Pr}(a\vert s, h)$. Constrained RL problem is concerned with finding the optimal policy to achieve the highest total reward subject to a set of constraints, which can be formally stated as 

\begin{equation}\label{eq_formulation}
	\begin{aligned}
	\textbf{Original PCMDP}:\quad\max\limits_{\bm{\pi}}\quad & \mathbf{E}\bigg[\sum_{h=1}^{H}r(s_h,\pi_h(s_h))\bigg]\\
	\textup{s.t.}\quad &
	\mathbf{E}[f_i^-(s_h,\pi_h(s_h))]\geq 0~~ \forall h\in[H],\forall i\in[I]
	\end{aligned}
\end{equation}

where the expectation is taken with respect to  both the policy $\pi$ and the transition probability $\mathbb{P}_h$, and $x^-\triangleq\min\{x,0\}$. In the following parts, we use $\mathbf{E}$ instead of $\mathbf{E}_{\pi,\mathbb{P}}$ for simplicity, which is the expectation value on the randomness of policy and transition dynamics. The formulated problem in Eq. \eqref{eq_formulation} is called \textbf{Original PCMDP} in this paper, which optimizes the total reward and satisfies the peak constraints simultaneously.

\begin{remark}\label{remark_peak}
	Since this paper considers the episodic setting with a tabular MDP. For a fixed $h$, there exists a discrete distribution for each state action pair $(s_h,a_h)$ where $a_h\sim\pi_h(\cdot\vert s_h)$ and $s_h\sim\mathbb{P}_{h-1}(\cdot\vert s_{h-1},a_{h-1})$. Denote this distribution as $\lambda(s,a)$. Thus, the constraint in Eq. \eqref{eq_formulation} can be expressed as
	\begin{equation}
		\mathbf{E}[f_i^-(s_h,\pi_h(s_h))]=\sum_{s\in\mathcal{S},a\in\mathcal{A}} \lambda(s,a)f_i^-(s,a)
	\end{equation}
	If $f_i(s,a)<0$ with a positive probability, then $\sum_{s\in\mathcal{S},a\in\mathcal{A}} \lambda(s,a)f_i^-(s,a)<0$, which gives a contradiction with Eq. \eqref{eq_formulation}. 
	Thus, the constraint in \textbf{Original PCMDP} can be considered as $f_i(s_h,a_h)\geq 0$ with probability 1, equivalently.
\end{remark}

We emphasis that the proposed PCMDP problem is a special case of Constrained MDP problem mentioned in \cite{altman1999constrained}. The difference is that constraint functions need to be satisfied in each epoch $h$ in our formulation, while it is only needed to be satisfied on an average along one episode in \cite{altman1999constrained}. {  In the proof of Lemma \ref{lem_duality}, it can be seen that PCMDP can be converted to the standard CMDP with $HI$ constraints. A standard approach for constrained MDP would use $HI$ Q-tables, one for each constraint. However, in this work, we provide a low space-time complexity approach that uses a single Q-table.} Besides, it is well known that the optimal policy for the Constrained MDP with average constraint functions could be stochastic. However, the authors of \cite{ather} showed that there is a deterministic policy which is optimal for the Peak Constraint MDP.

In order to make the problem non-trivial, we assume that the problem has a feasible solution. More formally,

\begin{assumption}[Feasibility]\label{ass:feasibility}
	There exists a policy $\pi$ such that $\mathbf{E}[f_i^-(s_h,\pi_h(s_h))]\geq 0$ for all $h\in[H]$ and $ i\in[I]$. 
\end{assumption}

We will make use of all the three assumptions in the remainder of the paper.  

We note that based on the definition of $x^-=\min\{x,0\}$, Slater Condition \cite{lempio1974note} will not hold for Eq. \eqref{eq_formulation}. Thus, we introduce a new slack variable $\xi>0$ and define $g_{i,\xi}(s,a) = f_i^-(s,a)+\xi$ to formulate the \textbf{Approximated PCMDP} as follows.
\begin{equation}\label{eq_new_formulation}
	\begin{aligned}
		\textbf{Approximated PCMDP:}\quad \max\limits_{\bm{\pi}}\quad & \mathbf{E}\bigg[\sum_{h=1}^{H}r(s_h,\pi_h(s_h))\bigg]\\
		\textup{s.t.}\quad &\mathbf{E}[g_{i,\xi}(s_h,\pi_h(s_h))] \geq 0~~\forall h\in[H],~\forall i\in[I]
	\end{aligned}
\end{equation}

We notice that the introduction of approximation parameter $\xi>0$ relaxes constraints and makes the feasible region larger. In the next lemma, it is shown that the Slater Condition always holds for \textbf{Approximated PCMDP}.

\begin{lemma}\label{lem_slater}
	There exists a set of $\gamma_{h,i},h\in[H],i\in[I]$ satisfying $0<\gamma_{h,i}\leq \xi,\forall h,i$ and a policy $\pi$ such that 

	\begin{equation}
		\begin{aligned}
		\mathbf{E}[g_{i,\xi}(s_h,\pi_h(s_h))] \geq \gamma_{h,i},\forall h,i
		\end{aligned}
	\end{equation}

\end{lemma}

\begin{proof}
	By Assumption \ref{ass:feasibility}, we have 
	\begin{equation}
		\mathbf{E}[g_{i,\xi}(s_h,\pi_h(s_h))]=\mathbf{E}[f_i^-(s_h,\pi_h(s_h))]+\xi\geq \xi\geq \gamma_{h,i}
	\end{equation}
\end{proof}

We define the state value function $V_h^{\pi}:\mathcal{S}\rightarrow\mathbb{R}$ at epoch $h$ under policy $\pi$ as follows
\begin{equation}
	V_h^{\pi}(s):=\mathbf{E}\bigg[\sum_{h'=h}^{H}r(s_{h'},\pi_{h'}(s_{h'}))\vert s_h=s\bigg]\label{vhdefn}
\end{equation}
Denote the set $\Pi$ as the constraint set in which the policy satisfies the constraints in the Eq. \eqref{eq_formulation}.  We denote an optimal policy as $\pi^*$, which gives the optimal value function for the original problem as

\begin{equation}
V_h^*(s)=\sup\limits_{\pi\in\Pi}V_h^{\pi}(s),
\end{equation} 
for all $s\in\mathcal{S}$ and $h\in[H]$.  Note that the introduction of approximation parameter $\xi$ would change the optimal policy and thus the optimal value function for the relaxed problem is a function of $\xi$ which is denoted by $V_{1,\xi}^*(s_1)$. In this paper, we extend the concept of PAC to define a concept of $\epsilon$-optimal policy for both the reward function and constraint violation, which is given as follows.
\begin{definition}[$(\epsilon,p)$-PAC policy]
	For any $p\in(0,1)$, if a policy $\bar{\pi}$ satisfies the following equations with probability at least $1-p$, then we call it an $(\epsilon,p)$-PAC policy
	\begin{equation}\label{eq:eps_optimal}
		\begin{aligned}
		V_1^*(s_1)-V_1^{\bar{\pi}}(s_1)&\leq \epsilon\\
		\sum_{h=1}^{H}\sum_{i=1}^{I}\mathbf{E}\bigg|f_i^-(s_h,\bar{\pi}_h(s_h))\bigg|&\leq \epsilon
		\end{aligned}
	\end{equation}
\end{definition}
By this definition, an $(\epsilon,p)$-PAC policy is a policy for which the value function is $\epsilon$ close to the optimal and the total constraint violation is less than $\epsilon$. Moreover, due to the definition of $x^-$ and the absolute notation in Eq. \eqref{eq:eps_optimal}, we know that the peak constraint violation is less than $\epsilon$ for each epoch $h$. Notice that the $(\epsilon,p)$-PAC policy is defined with respect to \textbf{Original PCMDP}.

\section{Proposed Algorithm}

For any state action pair $(s,a)$, we define a modified reward function as
\begin{equation}\label{alt_defn_R}
	R_\xi(s,a)=r(s,a)+\frac{\eta}{I}\sum_{i=1}^{I} g_{i,\xi}^-(s,a)
\end{equation}
where $g_{i,\xi}^{-}:=\min\{g_{i,\xi},0\}$, $\eta=\frac{2HI}{\gamma}$ and $\gamma=\min_{h,i}\gamma_{h,i}$. This modified reward function gives nearly the original reward function $r(s,a)$ when all constraints function are satisfied because $f_i^-(s,a)=0$ in this case. Further, this provides a penalty function when any of the constraints is larger than $\xi$. 
Based on the modified reward function, we define a counterpart of the value function $W_{h,\xi}^{\pi}(s)$ as
\begin{equation}
	W_{h,\xi}^{\pi}(s):=\mathbf{E}\bigg[\sum_{h'=h}^{H}R_\xi(s_{h'},\pi_{h'}(s_{h'}))\vert s_h=s\bigg]
\end{equation}
Further, let $W_{1,\xi}^{*}(s_1) = \max_{\pi} W_{1,\xi}^{\pi}(s_1)$.  W with the notation $[\mathbb{P}_hV_{h+1}](s,a):=\mathbf{E}_{s'\sim\mathbb{P}_h(\cdot\vert s,a)}V_{h+1}(s')$, we define a counterpart of the state-action function $Q_h^{\pi}(s,a)$ as
\begin{equation}
	\begin{aligned}
		Q_{h,\xi}^{\pi}(s,a)&:=R_\xi(s,a)+\mathbf{E}\bigg[\sum_{h'=h+1}^{H}R_\xi\big(s_{h'},\pi_{h'}(s_{h'})\big)\bigg|s_h=s,a_h=a\bigg]=(R_\xi+\mathbb{P}_hW_{h+1,\xi}^{\pi})(s,a)
	\end{aligned}
\end{equation}
With these notations, we are able to define a modified unconstrained MDP problem, \textbf{Modified MDP}, as 
\begin{equation}\label{eq_modified_problem}
	\textbf{Modified MDP}:\quad \max\limits_{\pi}\quad \mathbf{E}\bigg[\sum_{h=1}^{H}R_\xi(s_h,\pi_h(s_h))\bigg]
\end{equation}

Recalling the assumption that the original reward function $r(\cdot)$ is bounded, we show the absolute value of the modified reward function $R_\xi$ is also bounded.
\vspace{0.2in}
\begin{lemma}\label{lem_bound_R}
	If $\gamma<\min\{\xi,2HI(1-\xi)\}$. the absolute value of the modified reward function $R(s,a)$ is bounded. Formally,
		\begin{equation}
		\vert R(s,a)\vert\leq \eta \quad \forall (s,a)\in\mathcal{S}\times\mathcal{A}
		\end{equation}
\end{lemma}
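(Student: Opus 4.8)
The plan is to bound $R_\xi(s,a)$ from above and below \emph{separately}, exploiting the sign information in Assumptions~\ref{ass_bounded} and~\ref{ass_pos}, rather than a single triangle-inequality estimate. First I would pin down the range of each penalty term. Since $\gamma>0$ (it is the minimum of the strictly positive $\gamma_{h,i}$ from Lemma~1), the hypothesis $\gamma<\min\{\xi,2HI(1-\xi)\}$ forces both $\xi>0$ and $1-\xi>0$, so $0<\xi<1$. By Assumption~\ref{ass_bounded} we have $|f_i|\le 1$, hence $f_i^-=\min\{f_i,0\}\in[-1,0]$ and therefore $g_{i,\xi}=f_i^-+\xi\in[\xi-1,\xi]$. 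Taking the negative part gives $g_{i,\xi}^-=\min\{g_{i,\xi},0\}\in[-(1-\xi),0]$; that is, every penalty term is non-positive and bounded below by $-(1-\xi)$.

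Next I would handle the two directions. For the \emph{upper} bound, each summand $g_{i,\xi}^-\le 0$, so the penalty $\frac{\eta}{I}\sum_{i=1}^I g_{i,\xi}^-$ is non-positive and $R_\xi(s,a)\le r(s,a)\le 1$ by Assumption~\ref{ass_pos}. It then suffices to check $\eta\ge 1$: since $\xi>0$ we have $2HI(1-\xi)<2HI$, so the hypothesis gives $\gamma<2HI$ and hence $\eta=\frac{2HI}{\gamma}>1$, yielding $R_\xi(s,a)\le 1<\eta$. For the \emph{lower} bound, using $r(s,a)\ge 0$ (Assumption~\ref{ass_pos}) together with $g_{i,\xi}^-\ge-(1-\xi)$,
\begin{equation*}
R_\xi(s,a)\;\ge\;0+\frac{\eta}{I}\sum_{i=1}^I\big(-(1-\xi)\big)\;=\;-\eta(1-\xi)\;\ge\;-\eta,
\end{equation*}
where the last inequality uses $0\le 1-\xi\le 1$. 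Combining the two estimates gives $-\eta\le R_\xi(s,a)\le\eta$, i.e.\ $|R_\xi(s,a)|\le\eta$ uniformly in $(s,a)$.

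I do not expect a genuine obstacle: the argument is a direct range computation. The only points needing care are (i) correctly locating $g_{i,\xi}^-$ in $[-(1-\xi),0]$, which is where Assumption~\ref{ass_bounded} enters, and (ii) recognising the precise role of the hypothesis on $\gamma$ — it is used only through $\gamma<2HI$ to force $\eta>1$ for the upper bound, while the lower bound holds automatically because $1-\xi\le 1$. The companion condition $\gamma<\xi$ (consistent with $\gamma\le\xi$ from Lemma~1) is not actually needed for this estimate but keeps $\eta$ well defined. I would also remark that $\eta$ is a deliberately loose bound on both sides (the sharp range is $[-\eta(1-\xi),\,1]$); the symmetric bound $|R_\xi|\le\eta$ is merely the clean uniform form used in the later analysis.
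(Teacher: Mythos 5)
Your proof is correct, and it takes a genuinely different (and cleaner) route than the paper's. The paper argues by cases: if every $f_i(s,a)\ge -\xi$ then $R_\xi(s,a)=r(s,a)\in[0,1]$; otherwise it expands the penalty as $\frac{\eta}{I}\sum_{i}\big[f_i^-(s,a)+\xi\big]$, bounds $\frac{\eta}{I}\sum_i f_i^-(s,a)\in[-\eta,0]$, and so obtains the looser range $-\eta+\eta\xi\le R_\xi(s,a)\le 1+\eta\xi$ (the paper writes $1+\xi$ for the upper end, a typo); it then needs the full strength of the hypothesis $\gamma<2HI(1-\xi)$, i.e.\ $\eta(1-\xi)>1$, to push that upper end below $\eta$. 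You, by contrast, never discard the truncation: keeping $g_{i,\xi}^-=\min\{f_i^-+\xi,0\}\in[-(1-\xi),0]$ makes the penalty non-positive and bounded below by $-\eta(1-\xi)$, which yields the tighter range $[-\eta(1-\xi),1]$ with no case split at all; the hypothesis then enters only through the weaker consequence $\gamma<2HI$ (forcing $\eta>1$) for the upper bound, while the lower bound is automatic from $1-\xi\le 1$. Your treatment also quietly repairs a small imprecision in the paper's second case, whose expansion of the penalty as $\frac{\eta}{I}\sum_i[f_i^-+\xi]$ over all $i$ is exact only when \emph{every} constraint is violated by more than $\xi$; your term-by-term bound handles mixed situations uniformly. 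Both arguments are elementary range computations and both establish $\vert R_\xi(s,a)\vert\le\eta$, but yours makes explicit that the condition $\gamma<2HI(1-\xi)$ is used only through $\eta>1$, whereas in the paper's decomposition it is needed at full strength.
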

\begin{proof}
	If $f_i(s,a)\geq -\xi$, then $R_\xi(s,a)=r(s,a)$ and thus  $0\leq R_\xi(s,a)\leq 1$. 	
	Otherwise, we have from \eqref{alt_defn_R} that 
	\begin{equation}
		R_\xi(s,a)=r(s,a)+\frac{\eta}{I}\sum_{i=1}^{I} \big[f_i^-(s,a)+\xi]
	\end{equation}
	Notice that $-\eta\leq\frac{\eta}{I}\sum_{i=1}^{I}f_i^-(s,a)\leq 0$.
	Thus, we have $-\eta+\eta\xi\leq R_{\xi}(s,a)\leq 1+\eta\xi$. Since $\gamma<\min\{\xi,2HI(1-\xi)\}$, we have $-\eta\leq R_\xi(s,a)<\eta$. Then, the two cases together provide the result in the statement of the Lemma. 
\end{proof}
\begin{algorithm*}[tb]
	\caption{Constrained Q-Learning Algorithm}
	\label{alg:cons_q}
	\begin{algorithmic}[1]
		\State Initialize $Q_h(s,a)\leftarrow \eta H$, $W_h(s,a)\leftarrow \eta H$, $N_h(s,a)\leftarrow 0 $, $\mu_h(s,a)\leftarrow 0$, $\sigma_h(s,a)\leftarrow 0$ and $\beta_0(s,a,h)\leftarrow 0$ for all $(s,a,h)\in\mathcal{S}\times\mathcal{A}\times[H]$. Initial parameter $\xi$ and $\eta$
		\For{episode $k=1,...K$} 
		\State Observe $s_1$ 
		\For{step $h=1,...H$}
		\State Take action $a_h\leftarrow arg\max\limits_{a'}Q_h(s_h,a')$ and observe $s_{h+1}$
		\State $t=N_h(s_h,a_h)\leftarrow N_h(s_h,a_h)+1$
		\State $\mu_h(s_h,a_h)\leftarrow \mu_h(s_h,a_h)+W_{h+1}(s_{h+1})$
		\State $\sigma_h(s_h,a_h)\leftarrow \sigma_h(s_h,a_h)+(W_{h+1}(s_{h+1}))^2$
		\State $\beta_t(s_h,a_h,h)\leftarrow\min\{c_1(\sqrt{\frac{H}{t}(\frac{\sigma_h(s_h,a_h)-(\mu_h(s_h,a_h))^2}{t}+\eta H)\ell}+\eta\frac{\sqrt{H^7SA}\ell}{t}),c_2\eta\sqrt{\frac{H^3\ell}{t}}\}$
		\State $b_t\leftarrow \frac{\beta_t(s_h,a_h,h)-(1-\alpha_t)\beta_{t-1}(s_h,a_h,h)}{2\alpha_t}$
		\State $Q_h(s_h,a_h)\leftarrow(1-\alpha_t)Q_h(s_h,a_h)+\alpha_t[R_h(s_h,a_h)+W_{h+1}(s_{h+1})+b_t]$ ($R_h$ as defined in Eq. \eqref{alt_defn_R})
		\State $W_h(s_h)\leftarrow\min\{\eta H,\max\limits_{a'\in\mathcal{A}}Q_h(s_h,a')\}$
		\EndFor 
		\EndFor 
	\end{algorithmic}
\end{algorithm*}
We use the modified reward function to provide a Q-learning based algorithm as described in Algorithm \ref{alg:cons_q}. The basic steps of Q-learning follow from that in \cite{jinchi}, while are adapted to incorporate constraints.  In line 1, the agent initializes the Q-table and $N_h(s, a)$, which is the notation for the number of times that the state-action pair is taken at epoch $h$. In line 3, the agent is given an initial state at the beginning of each episode. Then, in line 5, the agent takes an action to maximize the current state-value function $Q_h(s_h, a_h)$ and observes the next state. $N_h(s,a)$ is updated in line 6. Line 7 to line 10 gives an efficient way to compute a Bernstein type UCB. Q-table and the W-table are then updated according to the line 11 and line 12, where $b_t$ is the upper confidence bound and $\alpha_t$ is the learning rate defined as $\alpha_t:=\frac{H+1}{H+t}$.

Using Algorithm \ref{alg:cons_q}, we find the policy $\pi_h^k$ at the step $h$ in episode $k$ is deterministic, and is defined as
\begin{equation}
	\pi_h^k(a\vert s_h^k) = \begin {cases}
	1 & a=\arg\min_{a'}Q_h^k(s_h^k,a')\\
	0 & \text{otherwise}\\
	\end{cases}
\end{equation}
Given a Markov Decision Problem with peak constraints, this paper shows that an $\epsilon$-optimal policy can be extracted from Algorithm \ref{alg:cons_q}. The guarantees of the proposed algorithm will be analyzed in the next section.

\section{PAC Analysis}
In this section, we will show that the proposed algorithm gives an $\epsilon$-optimal policy for $K$ large enough.   In order to prove the result, we first provide connections among the original problem \textbf{Original PCMDP}, the approximated problem \textbf{Approximated PCMDP} and modified unconstrained problem \textbf{Modified MDP}. Then, we derive the sub-linear result for \textbf{Modified MDP}. The main result can be derived by the guarantees for \textbf{Modified MDP} and its relation with \textbf{Original PCMDP} and \textbf{Approximated PCMDP}

Two following results, Lemma \ref{lem_opt_origin} and \ref{lem_opt_approx}, describe the relationship of optimal value function between \textbf{Modified MDP} and \textbf{Original PCMDP}/\textbf{Approximated PCMDP}, respectively.

\begin{lemma}\label{lem_opt_origin}
The optimal value function $V_1^*$ for \textbf{Original PCMDP} is  equal to the optimal value function $W_{1,\xi}^*$ for \textbf{Modified MDP}. More formally, 
	\begin{equation}
	V_{1}^*(s_1)=W_{1,\xi}^*(s_1)
	\end{equation}
\end{lemma}
\begin{proof}
	Considering the optimal policy $\pi^*$ in \textbf{Original PCMDP} in Eq. \eqref{eq_formulation}, we have 
	\begin{equation}\label{eq_opt1}
	V_1^*(s_1)=\mathbf{E}\bigg[\sum_{h=1}^{H}r(s_{h},\pi_{h}^*(s_{h}))\bigg]\overset{(a)}=\mathbf{E}\bigg[\sum_{h=1}^{H}R_\xi(s_{h},\pi_{h}^*(s_{h}))\bigg]\leq W_{1,\xi}^{*}(s_1)
	\end{equation}
	Step (a) holds because with feasible optimal policy in \textbf{Original PCMDP}, $f_{i}(s_h,a_h)\geq 0$ for any possible trajectories and thus
	$g_{i,\xi}(s_h,a_h)=f_i^{-}(s_h,a_h)+\xi=\xi$, which means
	\begin{equation}\label{eq_compare_R}
		R_\xi(s_h,a_h)=r(s_h,a_h)+\frac{\eta}{I}\sum_{i=1}^{I} \big[g_{i,\xi}^-(s_h,a_h)\big]=r(s_h,a_h)
	\end{equation}
	Moreover, the final inequality holds because the optimal policy for \textbf{Modified MDP} may be different from the \textbf{Original PCMDP} and any other policy will achieve less reward. For the other direction, consider the optimal policy $\pi^{W*}$ in the \textbf{Modified MDP} and follow the same step, 
	\begin{equation}
		W_{1,\xi}^{*}(s_1)=\mathbf{E}\bigg[\sum_{h=1}^{H}R_\xi(s_{h},\pi_{h}^{W*}(s_{h}))\bigg]\leq \mathbf{E}\bigg[\sum_{h=1}^{H}r(s_{h},\pi_{h}^{W*}(s_{h}))\bigg]\overset{(a)}\leq V_1^*(s_1)
	\end{equation}
	The first inequality holds by $g_{i,\xi}^-\leq 0$ in Eq. \eqref{eq_compare_R}. Thus, this gives the result as in the statement of the Lemma. 
\end{proof}

\begin{lemma}\label{lem_opt_approx}
	The optimal value function $V_{1,\xi}^*$ for \textbf{Approximated PCMDP} and the optimal value function $W_{1,\xi}^*$ for \textbf{Modified MDP} have the following relation:
	\begin{equation}
	 W_{1,\xi}^*(s_1)\le V_{1,\xi}^*(s_1)\le W_{1,\xi}^*(s_1)+\eta H\xi
	\end{equation}
\end{lemma}
\begin{proof}
	Considering the optimal policy $\pi^*$ in \textbf{Approximated PCMDP} in Eq. \eqref{eq_new_formulation}, define a distribution $d_h^{\pi^*,\mathbb{P}}$ such that $(s_h,a_h)\sim d_h^{\pi,\mathbb{P}}$ with policy $\pi^*$ and transition dynamics $\mathbb{P}$, then
	\begin{equation}
	\begin{aligned}
	\mathbf{E}[g_{i,\xi}^{-}(s_h,\pi_h^*(s_h))]&=\sum_{s,a}g_{i,\xi}^{-}(s,a)d_h^{\pi^*,\mathbb{P}}(s,a)\\
	&=\sum_{g_{i,\xi}(s,a)>0}g_{i,\xi}^{-}(s,a)d_h^{\pi^*,\mathbb{P}}(s,a)+\sum_{g_{i,\xi}(s,a)<0}g_{i,\xi}^{-}(s,a)d_h^{\pi^*,\mathbb{P}}(s,a)\\
	&=\sum_{g_{i,\xi}(s,a)<0}g_{i,\xi}(s,a)d_h^{\pi^*,\mathbb{P}}(s,a)\\
	&=\mathbf{E}[g_{i,\xi}(s_h,\pi_h^*(s_h))]-\sum_{g_{i,\xi}(s,a)>0}g_{i,\xi}(s,a)d_h^{\pi^*,\mathbb{P}}(s,a)\geq-\xi
	\end{aligned}
	\end{equation} 
	where the last step holds because $\mathbf{E}[g_{i,\xi}(s_h,\pi_h(s_h))]\geq 0$ by the formulation \eqref{eq_new_formulation} and $g_{i,\xi}\leq \xi$ by the definition. Thus,
	\begin{equation}
	\begin{aligned}
	\mathbf{E}[R_\xi(s_{h},\pi_{h}^*(s_{h}))]&=\mathbf{E}[r(s_{h},\pi_{h}^*(s_{h}))]+\frac{\eta}{I}\sum_{i=1}^{I} \mathbf{E}\big[g_{i,\xi}^-(s_{h},\pi_{h}^*(s_{h}))\big]\geq \mathbf{E}[r(s_{h},\pi_{h}^*(s_{h}))]-\eta\xi
	\end{aligned}	
	\end{equation}
	Finally, by the definition of value function in \eqref{vhdefn}, we have
	\begin{equation}\label{eq_opt2}
	\begin{aligned}
	V_{1,\xi}^*(s_1)&=\mathbf{E}\bigg[\sum_{h=1}^{H}r(s_{h},\pi_{h}^*(s_{h}))\bigg]\leq \mathbf{E}\bigg[\sum_{h=1}^{H}R_\xi(s_{h},\pi_{h}^*(s_{h}))\bigg]+\eta H\xi\leq W_{1,\xi}^{*}(s_1)+\eta H\xi
	\end{aligned}
	\end{equation}
	where the final inequality holds because the optimal policy for \textbf{Modified MDP} may be different from the \textbf{Approximated PCMDP} and any other policy will achieve less reward. For the other direction, recall that \textbf{Approximated PCMDP} is a relaxed version of \textbf{Original PCMDP} and thus
	\begin{equation}
		V_{1,\xi}^*(s_1)\geq V_{1}^*(s_1)=W_{1,\xi}^*(s_1)
	\end{equation}
\end{proof}

The following lemma describes the relationship of value function with a certain policy between \textbf{Modified MDP} and \textbf{Original PCMDP}.

\begin{lemma}\label{lem_policy}
	The value function with policy $\pi$ in \textbf{Modified MDP}, $W_{1,\xi}^{\pi}$, can be expressed by the value function with policy $\pi$ in \textbf{Original PCMDP}, $V_1^{\pi}$,  with a gap term describing the violation of constraints. More formally,
	\begin{equation}\label{eq_policy}
	W_{1,\xi}^{\pi}(s_1)=V_{1}^{\pi}(s_1)+\frac{\eta}{I}\sum_{h=1}^{H}\sum_{i=1}^{I}\mathbf{E}\big[g_{i,\xi}^-(s_h,\pi_h(s_h))\big]
	\end{equation}
\end{lemma}

\begin{proof}
	According to the definition of function $W$, we expand it as follows.
	\begin{equation}
	\begin{aligned}
		&W_{1,\xi}^{\pi_k}(s_1^k)=\mathbf{E}\bigg[\sum_{h=1}^{H}R_\xi(s_h^k,\pi_h^k(s_h^k))\bigg]\\
		&=\mathbf{E}\bigg[\sum_{h=1}^{H}r(s_h^k,\pi_h^k(s_h^k))\bigg]+\frac{\eta}{I}\sum_{h=1}^{H}\sum_{i=1}^{I}\mathbf{E}\big[g_{i,\xi}^-(s_h^k,\pi_h^k(s_h^k))\big]\\
		&=V_{1,\xi}^{\pi_k}(s_1^k)+\frac{\eta}{I}\sum_{h=1}^{H}\sum_{i=1}^{I}\mathbf{E}\big[g_{i,\xi}^-(s_h^k,\pi_h^k(s_h^k))\big]
	\end{aligned}		
	\end{equation}
	which is the result as in the statement of the Lemma.
\end{proof}	

The following lemma gives the sub-linear regret for \textbf{Modified MDP}.
\vspace{0.2in}
\begin{lemma}\label{lem_sublienar}
	For any $p\in(0,1)$, let $\ell=\log(SAT/p)$, {where $T=KH$}. Then,  for $K\geq H^5S^2A^2\ell^3$, the bound on the regret for \textbf{Modified MDP} with Algorithm \ref{alg:cons_q} is given as
	\begin{equation}
	\sum_{k=1}^{K}[W_{1,\xi}^*(s_1)-W_{1,\xi}^{\pi^k}(s_1)]\leq O(\eta \sqrt{H^3SAT\ell})
	\end{equation}
	with probability at least  $1-p$.
\end{lemma}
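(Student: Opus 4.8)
The plan is to adapt the optimistic Q-learning regret analysis of \cite{jinchi} to the modified reward $R_\xi$, whose range is $[0,\eta]$ instead of $[0,1]$ by Lemma \ref{lem_bound_R}, so that every value range is rescaled from $H$ to $\eta H$ and every bound inherits a factor of $\eta$. Throughout I write $Q_h^k,W_h^k,N_h^k$ for the values of the corresponding tables at the \emph{start} of episode $k$, and $(s_h^k,a_h^k)$ for the pair visited at step $h$ of episode $k$; recall $T=KH$. The argument rests on two pillars: \emph{optimism}, i.e. $Q_h^k(s,a)\ge Q_{h,\xi}^*(s,a)$ for all $(s,a,h,k)$ with probability at least $1-p$, which gives $W_{1,\xi}^*(s_1^k)-W_{1,\xi}^{\pi_k}(s_1^k)\le (W_1^k-W_{1,\xi}^{\pi_k})(s_1^k)=:\delta_1^k$; and a \emph{telescoping recursion} over $h$ that converts $\sum_k\delta_1^k$ into an accumulation of exploration bonuses plus a martingale.

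First I would record the properties of the learning rate $\alpha_t=\frac{H+1}{H+t}$ and the weights $\alpha_t^i$ it induces (with $\alpha_t^0=\prod_{j=1}^t(1-\alpha_j)$, $\alpha_t^i=\alpha_i\prod_{j=i+1}^t(1-\alpha_j)$): $\sum_{i=1}^t\alpha_t^i=1$, $\max_i\alpha_t^i\le \frac{2H}{t}$, $\sum_i(\alpha_t^i)^2\le\frac{2H}{t}$, and $\sum_{t\ge i}\alpha_t^i\le 1+\frac1H$. Unrolling line 11 of Algorithm \ref{alg:cons_q} for a pair visited $t=N_h^k(s,a)$ times at episodes $k_1<\dots<k_t$ gives
\[
(Q_h^k-Q_{h,\xi}^*)(s,a)=\alpha_t^0\big(\eta H-Q_{h,\xi}^*(s,a)\big)+\sum_{i=1}^{t}\alpha_t^i\Big[\Delta_{h+1}^{k_i}+\nu_h^{k_i}+b_i\Big],
\]
where $\Delta_{h+1}^{k_i}:=(W_{h+1}^{k_i}-W_{h+1,\xi}^*)(s_{h+1}^{k_i})$ and $\nu_h^{k_i}:=\big[(\hat{\mathbb{P}}_h^{k_i}-\mathbb{P}_h)W_{h+1,\xi}^*\big](s,a)$ form a martingale-difference sequence. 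The bonus $\beta_t$ of line 9 (and hence $b_t$ of line 10) is engineered so that $\sum_i\alpha_t^i b_i$ dominates $\sum_i\alpha_t^i\nu_h^{k_i}$; I would prove this domination — which is exactly optimism — by a Freedman/Bernstein concentration on $\sum_i\alpha_t^i\nu_h^{k_i}$, using that the conditional variance of $W_{h+1,\xi}^*(s_{h+1}^{k_i})$ is tracked by the empirical variance $\frac{\sigma_h-\mu_h^2}{t}$ of lines 7--8, plus a union bound over all $(s,a,h)$ and visit counts. This is where $\ell=\log(SAT/p)$ enters, and the additive $\eta\frac{\sqrt{H^7SA}\ell}{t}$ term of $\beta_t$ absorbs the error of using empirical rather than true variance.

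Next I would set $\delta_h^k:=(W_h^k-W_{h,\xi}^{\pi_k})(s_h^k)$ and, using optimism to handle the clipping $W_h^k=\min\{\eta H,\max_a Q_h^k\}$, derive the recursion $\sum_k\delta_h^k\le(1+\tfrac1H)\sum_k\delta_{h+1}^k+\sum_k\big(\text{bonus}_h^k+M_h^k\big)$ with $M_h^k$ a bounded martingale difference. Iterating over $h=1,\dots,H$ costs only $(1+\tfrac1H)^H\le e$ and a lower-order first-visit term $O(\eta SAH^2)$ from the $\alpha_t^0$ contribution. The accumulated bonus is then bounded by a pigeonhole/Cauchy--Schwarz step $\sum_{k,h}1/\sqrt{N_h^k}\le O(H\sqrt{SAK})$ together with the \emph{law of total variance} $\sum_{k,h}\mathrm{Var}_h^k\le O(\eta^2H^2K)$; these combine to turn the Bernstein bonus into the $\tilde O(\eta\sqrt{H^3SAT\ell})$ rate rather than the cruder $\eta\sqrt{H^4SAT\ell}$ that the Hoeffding branch $c_2\eta\sqrt{H^3\ell/t}$ alone would give. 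The martingale $\sum M_h^k$ is controlled by Azuma--Hoeffding, contributing only $\tilde O(\eta\sqrt{H^2T\ell})$. Finally, the hypothesis $K\ge H^5S^2A^2\ell^3$ is used precisely to guarantee that the accumulated lower-order term from $\eta\frac{\sqrt{H^7SA}\ell}{t}$, namely $O(\eta\sqrt{H^7SA}\,\ell\cdot SAH\log K)$, is dominated by the leading $\eta\sqrt{H^3SAT\ell}$ term.

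The step I expect to be the main obstacle is establishing optimism under the \emph{Bernstein} bonus. Unlike the Hoeffding case, $\beta_t$ depends on the online empirical variance, so I must simultaneously (a) control the deviation between the empirical and true conditional variance of $W_{h+1,\xi}^*$, and (b) run the martingale concentration on increments that are themselves estimated online, uniformly over a random visit schedule, while choosing the constants $c_1,c_2$ so that $\beta_t$ covers both error sources. Pinning down these constants and the variance-aggregation bound so that the $K\ge H^5S^2A^2\ell^3$ regime suffices is the delicate part; everything after optimism is bookkeeping that mirrors the unconstrained analysis of \cite{jinchi} with all value ranges rescaled from $H$ to $\eta H$.
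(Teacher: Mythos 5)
Your proposal is correct and is essentially the paper's own argument: the paper proves this lemma in one line by invoking Theorem 2 of \cite{jinchi} (the Bernstein-bonus UCB Q-learning regret bound) with the reward range rescaled from $1$ to $\eta$, and your sketch is a faithful reconstruction of that theorem's proof with the $\eta$-scaling carried through, including the correct role of the condition $K\ge H^5S^2A^2\ell^3$ in making the lower-order $\sqrt{H^9S^3A^3}\,\ell^2$ term dominated by the leading $\sqrt{H^3SAT\ell}$ term.
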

\begin{proof}
	The proof follows  from Theorem 2 in \citep{jinchi},  noting that the modified reward is bounded by $\eta$ and not $1$ as in \citep{jinchi}.
\end{proof}

Combining results of the above lemmas, the next theorem shows that the proposed algorithm is $\epsilon$-optimal.

\begin{theorem}\label{thm_bound}
For any $p\in(0,1)$, $H\ge 2$,  and any $\epsilon>0$, take $K=\Omega(\frac{I^2H^6SA\ell}{\epsilon^2})$, $\xi = \frac{\epsilon}{6HI}$, and $\ell=\log(SAT/p)$, {where $T=KH$}. Further, define $\bar{\pi}$ as a policy which is given as
	\begin{equation}
		\bar{\pi}(s) = \begin {cases}
		\pi^1(s) & \text{ with probability } 1/K\\
		\cdots & \cdots\\
		\pi^k(s) & \text{ with probability } 1/K\\
		\cdots & \cdots\\
		\pi^K(s) & \text{ with probability } 1/K
		\end{cases}
	\end{equation}
	Note that $\bar{\pi}$ chooses the different policies $\pi^k$ for $k\in[K]$ uniformly at random. If $H\geq 2$, then, the policy $\bar{\pi}$ is an $(\epsilon,p)$-PAC policy.
\end{theorem}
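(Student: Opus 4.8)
The plan is to fuse the three structural results---Lemma~\ref{lem_opt}, Lemma~\ref{lem_policy}, and the regret bound of Lemma~\ref{lem_sublienar}---into a single ``master inequality'' for the mixture policy $\bar{\pi}$, and then read off the reward suboptimality and the constraint violation from it separately. First I would use that the value functions are linear in the policy mixture: since $\bar{\pi}$ draws an index $k$ uniformly and then follows $\pi^k$, one has $W_{1,\xi}^{\bar{\pi}}(s_1)=\frac1K\sum_k W_{1,\xi}^{\pi_k}(s_1)$ and likewise $V_{1,\xi}^{\bar{\pi}}(s_1)=\frac1K\sum_k V_{1,\xi}^{\pi_k}(s_1)$. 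Dividing the bound of Lemma~\ref{lem_sublienar} by $K$ (and using $T=KH$) therefore yields, with probability at least $1-p$,
\[
W_{1,\xi}^*(s_1)-W_{1,\xi}^{\bar{\pi}}(s_1)\;\le\;\delta,\qquad \delta:=O\!\Big(\tfrac{\eta}{K}\sqrt{H^3SAT\ell}\Big)=O\!\Big(\eta H^2\sqrt{\tfrac{SA\ell}{K}}\Big).
\]

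Next I would substitute the two identities linking the modified and the approximated problems. Lemma~\ref{lem_opt} gives $W_{1,\xi}^*(s_1)\ge V_{1,\xi}^*(s_1)$, and applying Lemma~\ref{lem_policy} to $\bar{\pi}$ gives $W_{1,\xi}^{\bar{\pi}}(s_1)=V_{1,\xi}^{\bar{\pi}}(s_1)+\frac{\eta}{I}\sum_{h,i}\mathbf{E}[g_{i,\xi}^-(s_h,\bar{\pi}_h(s_h))]$. Plugging both into the displayed bound produces
\[
V_{1,\xi}^*(s_1)-V_{1,\xi}^{\bar{\pi}}(s_1)-\frac{\eta}{I}\sum_{h=1}^H\sum_{i=1}^I\mathbf{E}\big[g_{i,\xi}^-(s_h,\bar{\pi}_h(s_h))\big]\;\le\;\delta,
\]
in which the penalty sum is nonpositive because $g_{i,\xi}^-\le 0$.

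From this one inequality I would extract the two halves of the $\epsilon$-optimality definition. For the reward gap, drop the nonpositive penalty to get $V_{1,\xi}^*-V_{1,\xi}^{\bar{\pi}}\le\delta$; then pass to the original problem using that relaxing the constraints only enlarges the feasible set, so $V_1^*(s_1)\le V_{1,\xi}^*(s_1)$, while the reward $r$ is unchanged, so $V_{1,\xi}^{\bar{\pi}}=V_1^{\bar{\pi}}$. This gives $V_1^*(s_1)-V_1^{\bar{\pi}}(s_1)\le\delta$. For the constraint violation, instead lower-bound the reward gap by $-H$ (rewards lie in $[0,1]$ over $H$ steps, so $V_{1,\xi}^*-V_{1,\xi}^{\bar{\pi}}\ge -H$); the master inequality then gives $\frac{\eta}{I}\sum_{h,i}\mathbf{E}|g_{i,\xi}^-|\le \delta+H$, i.e. $\sum_{h,i}\mathbf{E}|g_{i,\xi}^-|\le \frac{I}{\eta}(\delta+H)$. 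Finally I would convert $g_{i,\xi}^-$ back to $f_i^-$ via the elementary pointwise bound $|f_i^-|\le|g_{i,\xi}^-|+\xi$ (immediate from $g_{i,\xi}=f_i^-+\xi$), which costs an extra $HI\xi=\epsilon/2$ once $\xi=\frac{\epsilon}{2HI}$ is inserted, while $\frac{IH}{\eta}=\frac{\gamma}{2}\le\frac{\xi}{2}$ is negligible.

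The crux---and the main obstacle---is the sizing of $K$, because the reward bound and the constraint bound pull in opposite directions through the penalty scale $\eta=2HI/\gamma$. The reward gap is controlled only by $\delta$, which grows linearly in $\eta$, so forcing $\delta\le\epsilon$ is precisely what demands $K=\Omega\!\big(\eta^2H^4SA\ell/\epsilon^2\big)=\Omega\!\big(I^2H^6SA\ell/\epsilon^2\big)$ (treating the Slater margin $\gamma$, hence $\eta$, as a constant of the problem); the constraint contribution $\frac{I}{\eta}\delta=O(IH^2\sqrt{SA\ell/K})$ is $\eta$-free and is dominated by a milder sample count, while the residual $HI\xi$ is exactly the $\epsilon/2$ budgeted by the choice of $\xi$. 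Verifying that this single value of $K$ simultaneously drives both quantities below $\epsilon$, and that the one high-probability event from Lemma~\ref{lem_sublienar} underwrites both conclusions, completes the argument.
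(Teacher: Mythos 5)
Your argument reproduces the paper's skeleton---the master inequality assembled from Lemma~\ref{lem_opt}, Lemma~\ref{lem_policy}, and Lemma~\ref{lem_sublienar}, the linearity of value functions under the episode-level mixture $\bar{\pi}$, and an identical treatment of the reward gap---but it extracts the constraint-violation half by a genuinely different and more elementary device. The paper never lower-bounds $V_{1,\xi}^*-V_{1,\xi}^{\bar{\pi}}$ by $-H$; instead it invokes the Lagrangian machinery of Appendix~\ref{app.sec.opt}: under the Slater condition and strong duality, Lemma~\ref{thm.violationgeneral} together with the dual bound of Corollary~\ref{cor.boundeddual} (the verification $C^*=\eta/I\ge 2Y^*$ is exactly why $\eta=2HI/\gamma$ was chosen) yields $\sum_{h,i}\mathbf{E}\vert g_{i,\xi}^-\vert\le 2I\delta/\eta=O(\tfrac{I}{K}\sqrt{H^3SAT\ell})$. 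Your route gives instead $\sum_{h,i}\mathbf{E}\vert g_{i,\xi}^-\vert\le I\delta/\eta+\gamma/2$, and it does work: since the paper's Lemma~1 (and the fact that $\mathbf{E}[g_{i,\xi}]\le\xi$ for every policy) forces $\gamma\le\xi$, your extra additive term $\gamma/2\le\epsilon/(4HI)$ fits inside the $\epsilon$ budget next to the conversion cost $HI\xi=\epsilon/2$, so after adjusting constants in the $\Omega(\cdot)$ for $K$ both halves of $\epsilon$-optimality follow from the single high-probability event of Lemma~\ref{lem_sublienar}. The trade-off: the paper's duality argument produces a violation bound that vanishes as $K\to\infty$ for fixed $\xi$ (no additive floor), a qualitatively stronger intermediate statement, whereas your argument dispenses with Appendix~\ref{app.sec.opt} entirely at no cost for this particular theorem. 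One caveat: you size $K$ by treating $\gamma$, hence $\eta$, as an $\epsilon$-independent constant, yet your violation step exploits $\gamma\le\xi=\epsilon/(2HI)$; these two uses are incompatible for small $\epsilon$ (with $\gamma\le\xi$ one has $\eta\ge 4H^2I^2/\epsilon$, and forcing $\delta\le\epsilon$ would then require $K=\Omega(I^4H^8SA\ell/\epsilon^4)$). This is not a defect you introduced---the paper makes the same hidden-constant assumption in the step $O(\eta\sqrt{H^4SA\ell/K})=O(I\sqrt{H^6SA\ell/K})$---but your proof leans on both sides of that inconsistency, so it is worth flagging explicitly.
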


\begin{proof}
	The detailed proof is provided in the Appendix \ref{app_thm1}. 
\end{proof}
\begin{remark}
	The above result shows that we obtain an {$(\epsilon,p)$-PAC policy} if the number of episodes satisfies  $K\geq\Omega(\frac{I^2H^6SA\ell}{\epsilon^2})$. Thus, $K\geq \Omega(1/\epsilon^2)$, which is the best result for the problem even without constraints \cite{jinchi}. 
	
	We also note that even though the Theorem statement mentions $H\ge 2$, the result also holds when $H=1$, where we obtain $2\epsilon$-optimal strategy. Thus, the {$(\epsilon,p)$-PAC policy} will hold by replacing $\epsilon$ with $\epsilon/2$. 
\end{remark}

In the next theorem, it shows that the deterministic policy $\pi^k$ also converges to optimal value and constraint violations converge to zero. However, there exists a trade-off of the convergence rate between regret and constraint violation, which is slower than the uniform policy proposed in theorem \ref{thm_bound}.
\begin{theorem}\label{cor_deterministic}
By choosing $\eta=T^{\phi}$ {where $T=KH$},  $\phi\in(0,\frac{1}{2})$, and setting $\xi=0$, {the total regret and constraint violation with policy $\pi^k$ in Algorithm \ref{alg:cons_q} are bounded as}
	\begin{equation}
		\begin{aligned}
		\sum_{k=1}^{K}[V_{1}^*(s_1)-V_1^{\pi^k}(s_1)]&\leq O(T^{\frac{1}{2}+\phi}\sqrt{H^3SA\ell})\\
		\sum_{k=1}^{K}\sum_{h=1}^{H}\sum_{i=1}^{I}\mathbf{E}\big|g_i^-(s_h^k,\pi_h^k(s_h^k))\big|&\leq O(IT^{1-\phi})
		\end{aligned}
	\end{equation}
\end{theorem}
\begin{proof}
	The detailed proof is provided in the Appendix \ref{app_slow_convergence}. 
\end{proof}
\begin{remark}
	Despite that Theorem 2 shows deterministic policy also achieves sublinear regret and constraint violation, it doesn't achieve $\mathcal{O}(\sqrt{T})$ regret for both the objective and constraint violations. Whether a deterministic policy achieves optimal regret is left as a future direction.
\end{remark}

\section{Simulations}
\subsection{Energy Harvesting Communication System}
\begin{figure}[ht]
	\vskip -0.1in
	\begin{center}
		\centerline{\includegraphics[width=\columnwidth]{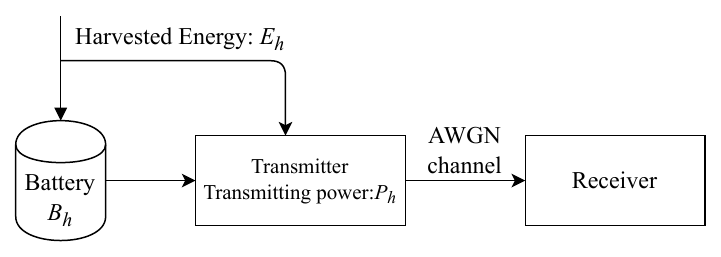}}
		\caption{Energy Harvesting Communication System}
		\label{system}
	\end{center}
	\vskip -0.2in
\end{figure}
In this section, we evaluate the proposed algorithm on a communication channel, where the transmitter is powered by renewable energy. Such a model has been studied widely in communication systems \cite{tutuncuoglu2015optimum,Blasco,Yang,wang2015paper,wang2014power}. In this model, we assume that the time is divided into time-slots. As shown in Fig. \ref{system}, in each time-slot, the transmitter can send data over an Additive Gaussian White Noise (AWGN) channel, where the signal transmitted by the transmitter gets added by a noise given by complex normal with zero mean and unit variance $\mathbb{CN}(0,1)$ at each time instance within the time-slot. We assume that the transmitter can use a power of $P_h$ in time-slot $h$, where the transmission is limited by a maximum power of $\bar{P}$. 

We assume that the transmitter is powered by a renewable energy source, where energy $E_h$ arrives during time-slot $h-1$ and can be used for time-slot $h$. Further, the transmitter is attached to a battery, which has a capacity of $\bar{B}$. The transmitter can use the energy from the existing battery capacity at the start of time-slot $h$, $B_h$, or the new energy arrival $E_h$. The energy from $E_h$ that is not utilized is stored in the battery. Thus, the battery state evolves as 
\begin{equation}\label{battery}
	B_{h+1}=\min\{\bar{B}, B_h+E_h-P_h\}. 
\end{equation}
We wish to optimize an upper bound on the reliable transmission rate \cite{wang2015paper}, given as 
$\mathcal{C} = \sum_h \log(1+P_h). $
We note that the transmission constraints can be modeled as peak constraints. Thus, the overall optimization problem is given as 
\begin{align}
	\max\limits_{P_h, h=1, \cdots, H}&\quad\mathbb{E}\bigg[\sum_{h=1}^{H}\log(1+P_h)\bigg]\label{opt_pro}\\
	s.t.& \quad B_{h+1}=\min\{\bar{B}, B_h+E_h-P_h\}\label{state_evolve}\\
	& \quad B_h\geq 0\label{action_space}\\
	&\quad P_h\leq \bar{P} \quad w.p.1\label{peak_constrain}
\end{align} 
We note that the expectation in the above is over the energy arrivals $E_h$, which makes the choice of $P_h$ stochastic. If the energy arrivals $E_h$ are known non-causally (known at $h=1$ for the entire future), the problem is convex and can be solved efficiently using the dynamic water-filling algorithm proposed in \cite{wang2015paper} or dynamic programming based solutions \cite{Blasco,wang2014power}. However, in realistic systems, $E_h$ is only known at time-slot $h$.

We will now model the problem as an MDP. The state at time-slot $h$ is given as $S_h=(B_h, E_h)$, which are the current battery level and the energy arrival. The energy $E_h$ is known causally, and the distribution is unknown. The action is the transmission power $P_h$. Eq. \eqref{state_evolve} gives the state evolution and the $E_h$ may evolve based on some Markov process in general. Eq. \eqref{action_space} restricts battery level must be positive and indirectly gives the action space for $P_h$ such that $\mathcal{A}=\{0,1,...,B_h+E_h\}$. Finally, the objective is given in \eqref{opt_pro} and the peak power constraint is given in Eq. \eqref{peak_constrain}. Notice that the the constraint function is not known in advance.

We set the distribution of $E_h$ as truncated Gaussian with mean $\mu$ and standard deviation $\sigma$, where the truncation levels are $0$ and $\bar{E}$, and we let it be independent across episodes. The problem is discretized to integers in order to apply the proposed algorithm. According to the selection of the parameters in \cite{wang2014power}, we set the horizon $H=20$ time-slots, battery capacity $\bar{B}=20$, power constraint $\bar{P}=8$, maximal harvest energy $\bar{E}=20$, mean and standard deviation $\mu=10$, $\sigma=5$, respectively. 

In the simulation, 1000 trajectories are generated by the above MDP. In Fig. \ref{fig:Learn_pro}, we plot the mean and variance for the sum of the transmission rate $V_1^{\bar{\pi}^k}$ and constraint violations defined in Eq. \eqref{eq:eps_optimal} and compare the learning speed for different choice of $\xi=0.1,0.01,0.001$. The Slater parameter for $\gamma$ is chosen to be $\frac{\xi}{2}$. Note that in each episode $k$, we evaluate the policy $\bar{\pi}^k$, which is the `averaged' policy by $\pi^1,\pi^2,\cdots, \pi^k$ as defined in Theorem \ref{thm_bound}. It can be seen that the sum of transmission rate converges to the optimal and the constraint violation converge to 0 as the number of episodes increases. Moreover, we can find that larger $\gamma$ gives a higher convergence speed while the difference between convergence speed for different $\gamma$ is quite small.

In order to compare the proposed algorithm, we consider three other baseline algorithms: the greedy policy, the balanced policy, and the optimal non-causal algorithm. The greedy policy tries to consume the harvested energy as much as possible in each slot, as calculated by $P_h = \min(\bar{P}, B_h +E_h)$. We also consider a balanced policy that consumes the fixed amount of energy in each slot if available, where the fixed value is calculated by $\sum_{h=1}^H E_h/H$, while that is limited by the available energy at each time. However, the balanced algorithm uses the future energy arrivals and is not a causal strategy. Further, the optimal strategy when all future energy arrivals are non-causally known is also used to show the performance of the proposed algorithm. We note that the proposed algorithm only assumes that the constraint function in state $s$ and action $a$ can be queried, but the function is not explicitly known, thus, the algorithms that project to the constraint function are not considered as they require complete knowledge of the function.

\begin{figure}
	\centering
	\begin{minipage}{.47\textwidth}
		\centering
		\includegraphics[width=\textwidth]{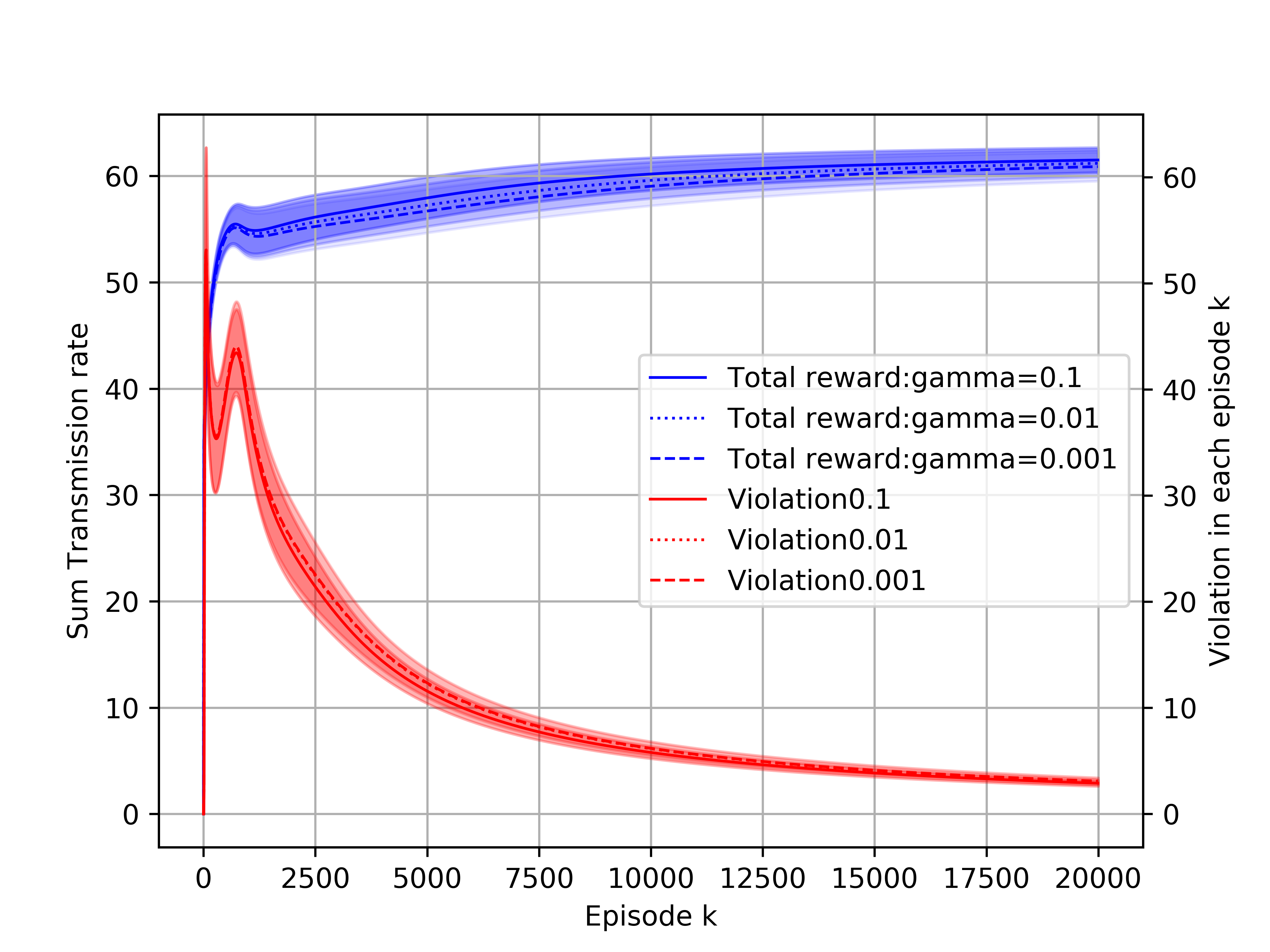}
		\caption{Convergence of Reward Function and Constraint Violations for the Proposed Algorithm}
		\label{fig:Learn_pro}
	\end{minipage}%
	\hspace{.2in}
	\begin{minipage}{.47\textwidth}
		\centering
		\includegraphics[width=\textwidth]{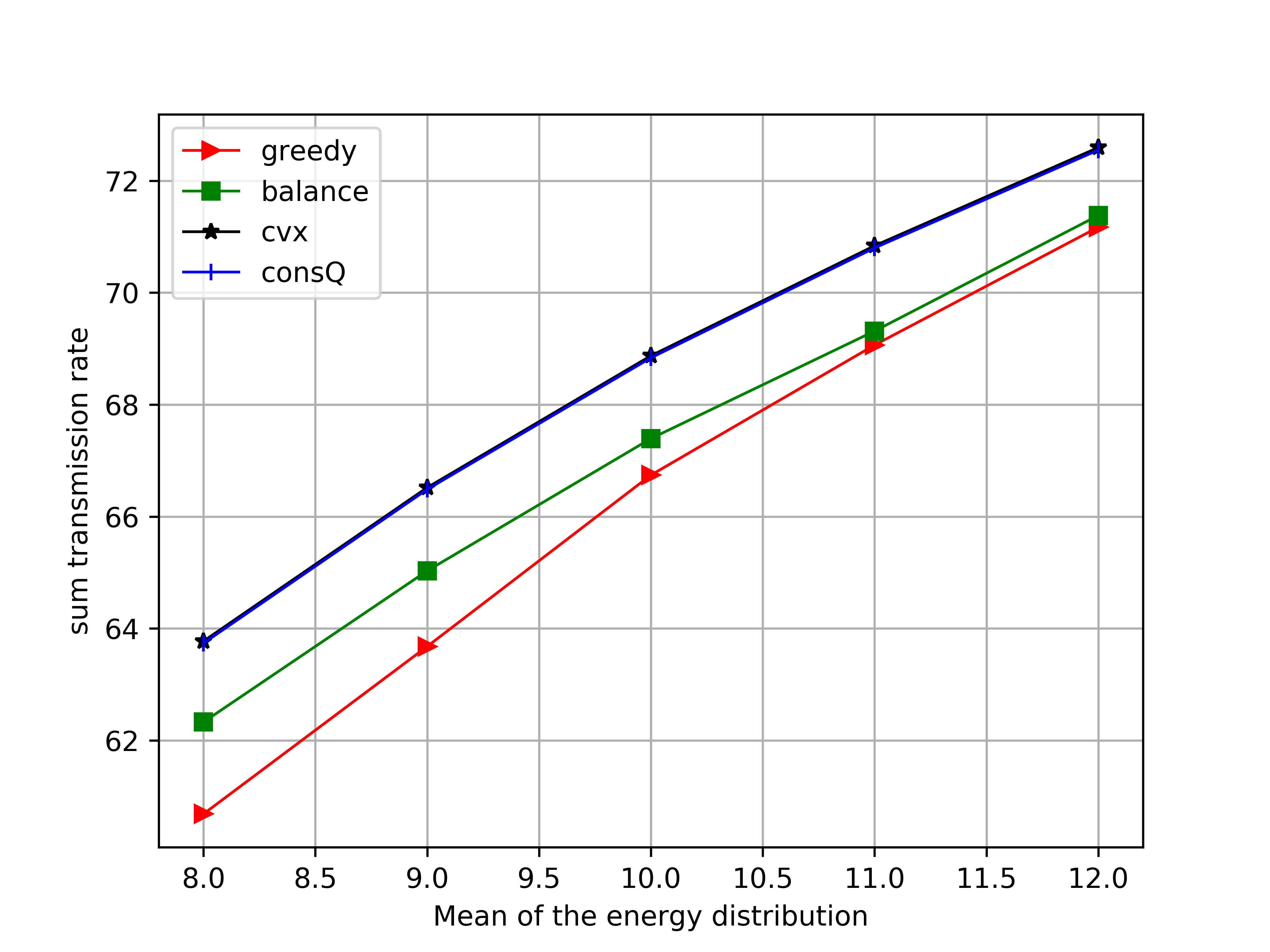}
		\caption{Performance Comparison Between Different Algorithms}
		\label{fig:compare}
	\end{minipage}
\end{figure}

In Fig. \ref{fig:compare}, we set the mean of the harvested energy as $8,9,10,11,12$ and compare the performance of different algorithms. In order to illustrate the policy convergence to the optimal, we choose $K=50000$ for the proposed algorithm. $\bar{P}$ is set to $15$ in this comparison. We see the balanced policy achieves higher performance than greedy policy because the energy can be allocated more reasonably while requiring non-causal information of energy arrival. The performance of the non-causal convex solver achieves the highest reward since it is an upper bound on the performance. However, the proposed algorithm achieves nearly the same performance as the upper bound, which shows that our algorithm is able to achieve the optimal solution. Furthermore, the proposed algorithm doesn't need any prior knowledge of the harvested energy and the constraint functions, which is a great advantage over the convex solver.

\subsection{Single machine scheduling problem with deadlines}
In this subsection, we evaluate our proposed algorithm on another problem, single machine scheduling with deadline. We adopt the problem formulated in \cite{KOULAMAS2001447}. In this scheduling problem, we assume there are a total of  $n$ jobs and each job is released at the beginning. Each job $i$ has a process time $p_i$, due time $d_i$, and deadline $\bar{d}_i$. Notice that the deadline is different from due time in the sense that the deadline cannot be violated. The tardiness of each job is computed by $T_i=\max\{0,C_i-d_i\}$, where $C_i$ is the completion time of job $i$. The preemption is not allowed in this problem. We want to minimize the maximal tardiness $T_{max}=\max_{i\in N}T_i$ and satisfy each deadline at the same time. This problem is  written as $1|\bar{d}_i| T_{max}$ in the standard scheduling notation. If the information of this problem, the process time $p_i$, the due time $d_i$, and deadline $\bar{d}_i$ are given in advance, there exists an optimal algorithm DT proposed in \cite{KOULAMAS2001447} and thus the DT algorithm can be considered as an offline baseline.

The scheduling problem with the deadline which cannot be violated is an important setting in the field of scheduling. This problem has several applications in practice such as the chemical industry \cite{KOULAMAS2001447}, energy efficient packet transmission \cite{packet}, workflow scheduling \cite{workflow}, etc. In order to solve this problem with the proposed algorithm, it needs to be first formulated as a PCMDP.

The state of CMDP $s$ in this problem consists of three parts, time $t$, job states $js$, and maximal  tardiness $maxT$ such that $s=[t, js, maxT]$. The job states $js$ are made of state of each job such that $js=[js_1,js_2,\cdots,js_N]$ where $js_i=0$ means the job $i$ has not been started and $js_i=1$ means the job $i$ has been completed. $maxT$ here stands for the current maximal tardiness which can be defined as $maxT=\max_{i\in A_t}T_i$ and $A_t$ is the set including all completed jobs until time $t$. The action space of CMDP is the list of uncompleted jobs. Denote $a$ as the action, $t'$, $js'$, $maxT'$ as the state of CMDP in next time. The dynamics of this PCMDP is written as follow
\begin{equation}
	\begin{aligned}
	t'&=t + p_a\\
	js'&=[js_1,\cdots,1,\cdots, js_N](\text{where 1 is at position a})\\
	maxT'&=\max\{maxT, \max\{0, t + p_a - d_a\}\}
	\end{aligned}
\end{equation} 

Besides, according to the objective of problem setting which is to minimize the maximum of tardiness, the reward function and constraint function are designed as 
\begin{equation}
	\begin{aligned}
	r(s,a)&=-\max\{0, t + p_a - d_a - maxT\}\\
	f(s,a)&=\max\{0, t + p_a -\bar{d}_a\}
	\end{aligned}
\end{equation}
It can be seen that the dynamics, reward, and constraint functions are totally decided by current state and action. Thus, the above formulation can be considered as a CMDP.

\begin{table}
	\centering
	\caption{Information of jobs in the 1st example of Single Machine Scheduling}
	\label{example1}
	\begin{tabular}{cccccc} 
		\hline
		Processing time ($p_i$) & 3 & 5 & 7 & 9 & 10 \\ 
		\hline
		Due Time ($d_i$) & 22 & 30 & 33 & 15 & 18 \\ 
		\hline
		Deadline ($\bar{d}_i$) & 30 & 28 &35 & 18 & 21 \\ 
		\hline	
	\end{tabular}
\end{table}

\begin{table}
	\centering
	\caption{Information of jobs in 2nd example of Single Machine Scheduling}
	\label{example2}
	\begin{tabular}{cccccccccc} 
		\hline
		Processing time ($p_i$) & 2 & 3 & 5 & 8 & 13 & 21 & 34 & 17 & 19\\ 
		\hline
		Due Time ($d_i$) & 75 & 70 & 65 & 60 & 88 & 35 & 59 & 100 & 100 \\ 
		\hline
		Deadline ($\bar{d}_i$)  & 70 & 70 & 70 & 100 & 90 & 40 & 60 & 130 & 110 \\ 
		\hline	
	\end{tabular}
\end{table}

{ 
\begin{table}
	\centering
	\caption{Information of jobs in the 3rd example of Single Machine Scheduling}
	\label{example3}
	\begin{tabular}{cccccc} 
		\hline
		Processing time ($p_i$) & $\sim U(2,4)$ & $\sim U(4,6)$ & $\sim U(3,8)$ & $\sim U(8,11)$ & $\sim U(8,11)$ \\ 
		\hline
		Due Time ($d_i$) & 22 & 30 & 33 & 15 & 12 \\ 
		\hline
		Deadline ($\bar{d}_i$) & 30 & 28 & 35 & 18 & 23 \\ 
		\hline	
	\end{tabular}
\end{table}
}

\if 0<<<<<<< HEAD
To test our algorithm and compare with the offline DT algorithm, we manually design two examples with 5 jobs and 9 jobs, respectively. The information of three examples. Two of them are designed with 5 and 9 jobs, respectively, with the deterministic process time. The last example is with 5 jobs and stochastic process time. For the first two examples, the transition dynamic is deterministic. While it is stochastic for the third example.  The details are listed in Tables \ref{example1}, \ref{example2} and \ref{example3}, respectively. The comparison among the proposed algorithm, offline DT algorithm and Earliest Deadline Date (EDD) algorithm for three examples are shown in Fig. \ref{fig:compare1}, \ref{fig:compare2} and \ref{fig:compare3}. In Fig. \ref{fig:compare3}, we run 100 independence tests and plot the average maximal tardiness and the standard variance. It can be seen that EDD algorithm always selects the job which has the earliest deadline at each time. It can be seen that EDD algorithm in this setting is sub-optimal. Furthermore, the total reward of the proposed algorithm converges to optimal DT value and the constraint violation converges to 0, which matches the theoretical result. Notice that the proposed algorithm is an online algorithm and thus doesn't need any information of job in advance, which is a great advantage over the offline DT algorithm.
=======
\fi 
To test our algorithm and compare with the offline DT algorithm, we manually design two setups with 5 jobs and 9 jobs, respectively.  {  With 5 jobs, we consider two scenarios, where the processing time of the jobs are deterministic or stochastic. For 9 jobs, we consider deterministic processing times. We label the three examples here as Example 1: 5 jobs, deterministic processing time, Example 2: 9 jobs, deterministic processing time, and Example 3: 5 jobs, stochastic processing time. The details for these examples are listed in  Tables \ref{example1}, \ref{example2} and \ref{example3}, respectively.}

The comparison among the proposed algorithm, offline DT algorithm and Earliest Deadline Date (EDD) algorithm for three examples are shown in {  Fig. \ref{fig:compare1}, \ref{fig:compare2}, and \ref{fig:compare3}, respectively}. { In Fig. \ref{fig:compare3}, we run 100 independent tests and plot the average maximal tardiness and the standard variance.} It can be seen that EDD algorithm always selects the job which has the earliest deadline at each time.Further, EDD algorithm in this setting is sub-optimal. Furthermore, the total reward of the proposed algorithm converges to optimal DT value and the constraint violation converges to 0, which matches the theoretical result. Notice that the proposed algorithm is an online algorithm and thus doesn't need any information of job in advance, which is a great advantage over the offline DT algorithm.

\begin{figure}
	\centering
	\begin{minipage}{.47\textwidth}
		\centering
		\includegraphics[width=\textwidth]{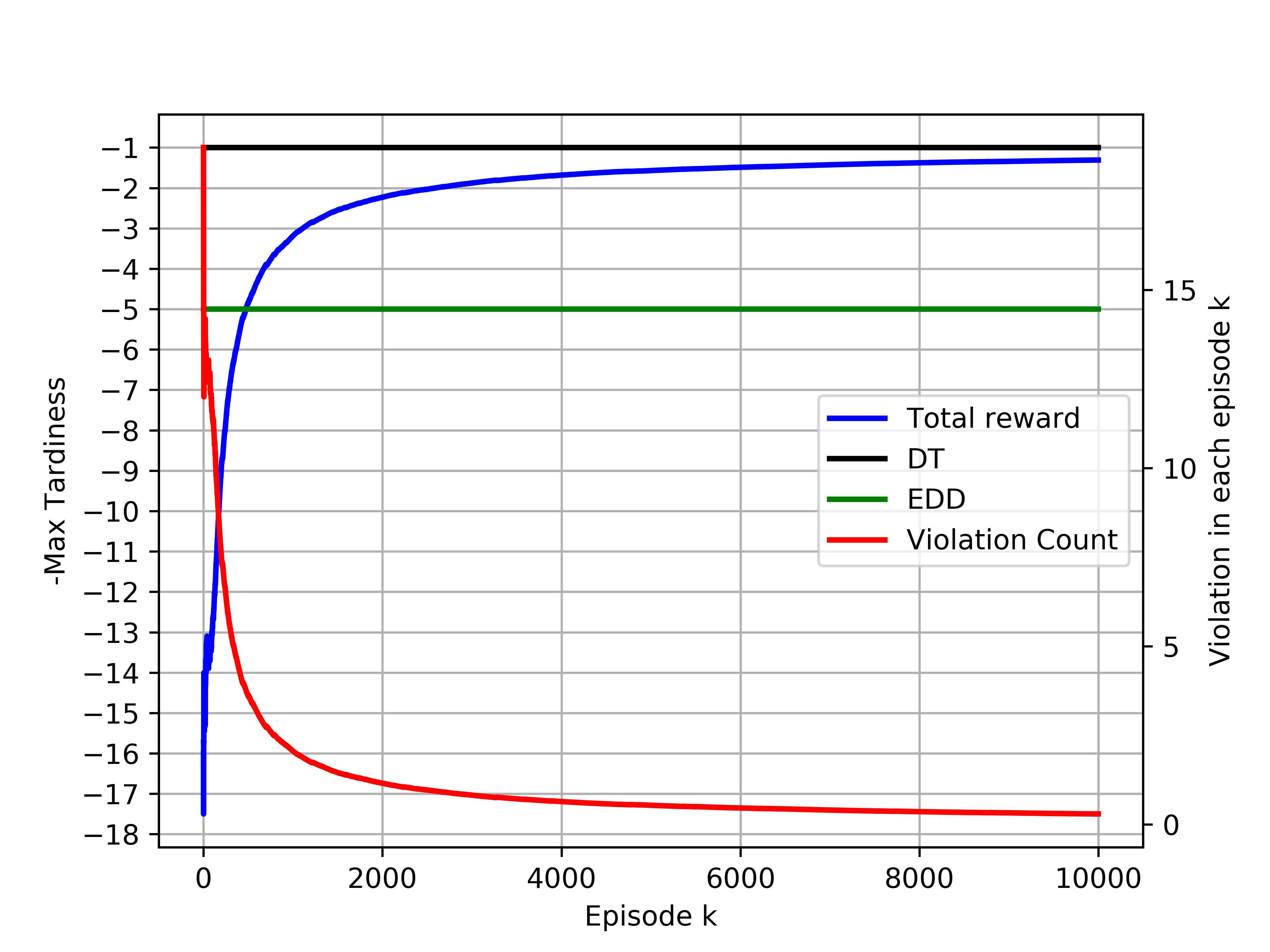}
		\caption{Comparison of proposed algorithm with DT and EDD in 1st example}
		\label{fig:compare1}
	\end{minipage}%
	\hspace{.2in}
	\begin{minipage}{.47\textwidth}
		\centering
		\includegraphics[width=\textwidth]{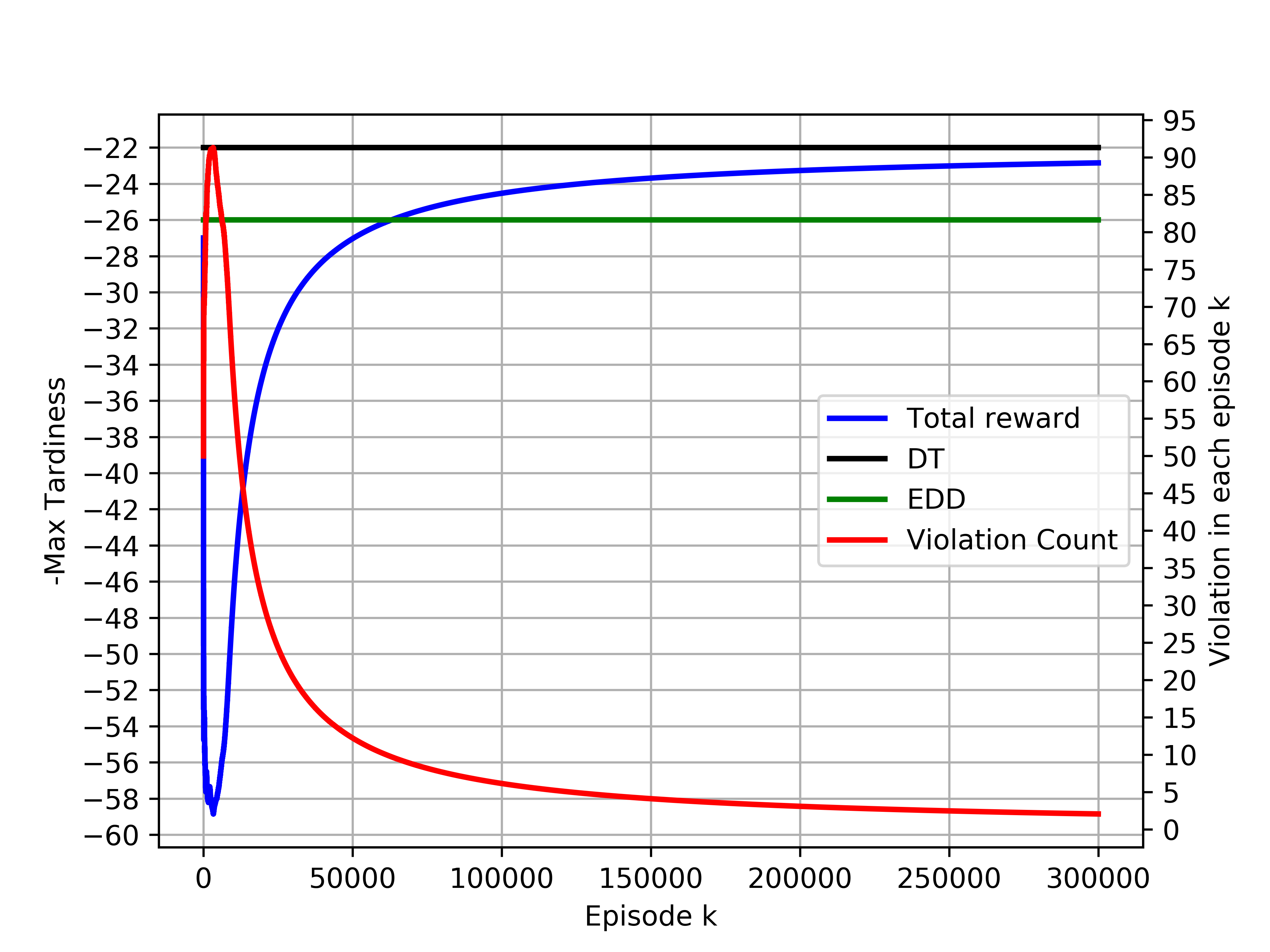}
		\caption{Comparison of proposed algorithm with DT and EDD in 2nd example}
		\label{fig:compare2}
	\end{minipage}
\end{figure}

{ 
\begin{figure}
	\centering
	\includegraphics[width=0.7\textwidth]{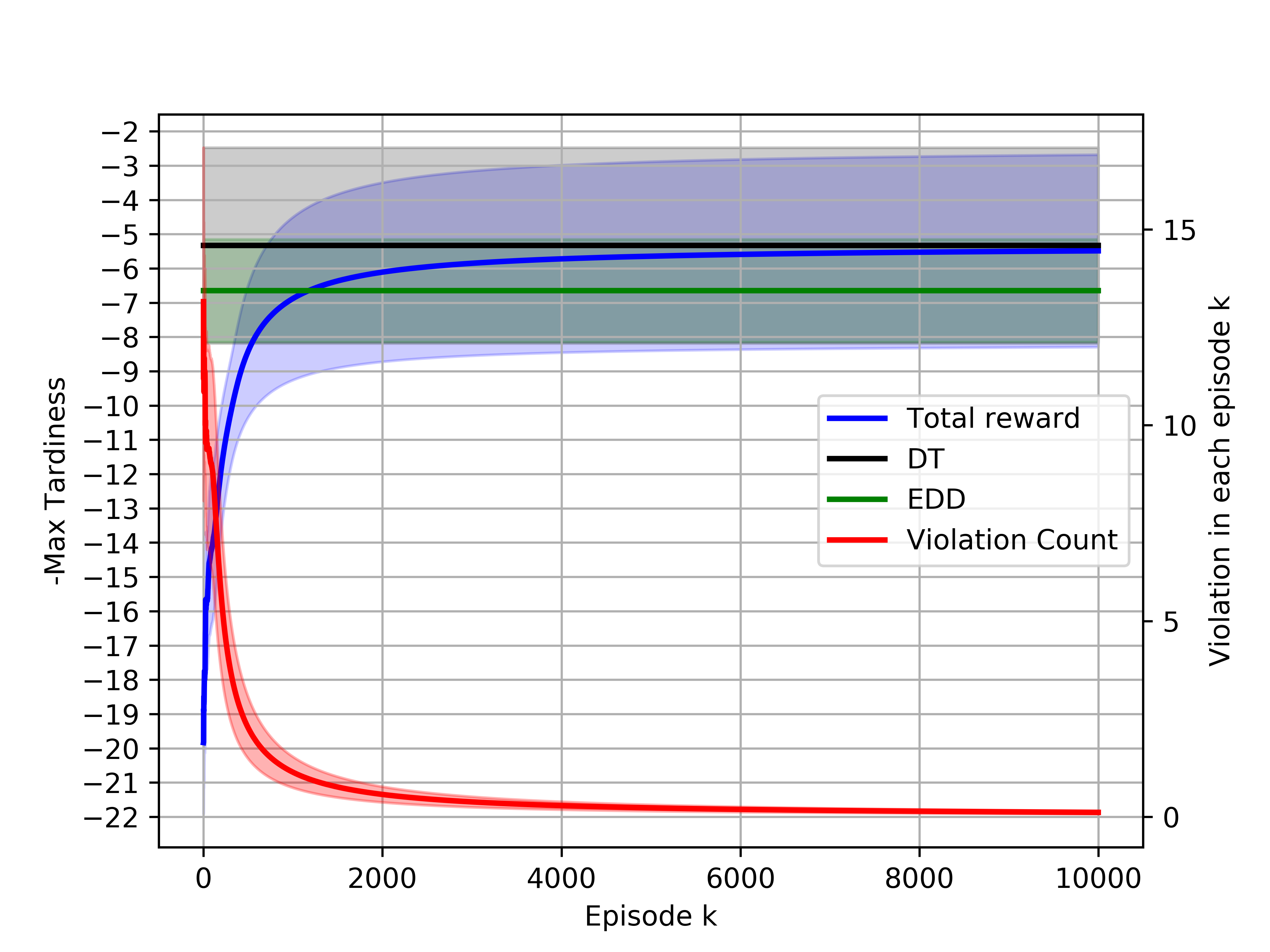}
	\caption{Performance comparison between different algorithms in 3rd example}
	\label{fig:compare3}
\end{figure}
}
Finally, it is noteworthy to see that the proposed algorithm can also be applied in the other settings with deadline such as the total tardiness \cite{KOULAMAS2001447}, total weight late work \cite{deadline2019} which has been shown as NP-hard and lack an optimal algorithm. However, the proposed algorithm can achieve $\epsilon$-optimal in finite time.

\section{Conclusion}
In this paper, we formulate a constrained MDP problem with a set of peak constraints. By using a modified reward function, we convert the problem into an unconstrained RL problem and propose a novel model-free algorithm. This paper proves our algorithm can achieve  $\epsilon$-optimal objective and constraint violations. We note this is the first result of PAC analysis for PCMDP when the state evolution and the constraint functions are unknown. The result is applied to an energy harvesting communication link and a single machine scheduling with deadline, and the proposed algorithm is shown to be close to the non-causal optimal solution.

\appendix
\section{Proof of Theorem \ref{thm_bound}}\label{app_thm1}
\subsection{Bounding the Regret}
	Combining  the results in Lemmas \ref{lem_opt_origin},   \ref{lem_policy}, and \ref{lem_sublienar}, for large enough $T$, we have
	\begin{equation}\label{eq_bound_regret}
		\sum_{k=1}^{K}[V_{1}^*(s_1)-V_1^{\pi^k}(s_1)]-\frac{\eta}{I}\sum_{k=1}^{K}\sum_{h=1}^{H}\sum_{i=1}^{I}\mathbf{E}\big[g_{i,\xi}^-(s_h^k,\pi_h^k(s_h^k))\big]\leq \sum_{k=1}^{K}W_{1,\xi}^*(s_1)-W_{1,\xi}^{\pi^k}(s_1)\leq O(\eta\sqrt{H^3SATl})	
	\end{equation}
	We note that the sum of terms $g_{i,\xi}^-$ (Accumulation of constraint violations) on the left hand side of \eqref{eq_bound_regret} is non-positive, which gives
	\begin{equation}\label{eq_bound_regret2}
		\frac{1}{K}\sum_{k=1}^{K}[V_{1,\xi}^*(s_1)-V_1^{\pi^k}(s_1)]\leq O(\frac{\eta}{K}\sqrt{H^3SAT\ell})
	\end{equation} 
	We define a new policy $\bar{\pi}$ which uniformly chooses the policy $\pi^k$ for $k\in[K]$. By the occupancy measure method, $V_1^{\pi^k}(s_1)$ is linear in terms of an occupancy measure induced by policy $\pi^k$ and initial state $s_1$, thus:
	\begin{equation}\label{eq_avg_value}
		\frac{1}{K}\sum_{k=1}^{K}V_1^{\pi^k}(s_1)=V_1^{\bar\pi}(s_1)
	\end{equation}
	Combining Eq. \eqref{eq_bound_regret2} with \eqref{eq_avg_value} and recalling our choice for $\eta=\frac{2HI}{\gamma}$, we have
	\begin{equation}\label{eq_bound_regret_final}
		V_1^*(s_1)-V_1^{\bar{\pi}}(s_1)\leq O(I\sqrt{\frac{H^6SA\ell}{K}})
	\end{equation}
\subsection{Bounding the Constraint Violations}
	Similar to Eq. \eqref{eq_bound_regret}, however, instead of Lemma \ref{lem_opt_origin}, \ref{lem_policy} and \ref{lem_sublienar}, using Lemma \ref{lem_opt_approx}, \ref{lem_policy} and \ref{lem_sublienar}, we have
	\begin{equation}\label{eq_bound_violation}
		\frac{1}{K}\sum_{k=1}^{K}[V_{1,\xi}^*(s_1)-V_1^{\pi^k}(s_1)]-\frac{1}{K}\frac{\eta}{I}\sum_{k=1}^{K}\sum_{h=1}^{H}\sum_{i=1}^{I}\mathbf{E}\big[g_{i,\xi}^-(s_h^k,\pi_h^k(s_h^k))\big]\leq O(\frac{\eta}{K}\sqrt{H^3SATl})+\eta H\xi	
	\end{equation} 
	Due to the concavity of the function $x^-=\min\{x,0\}$ and by Jensen Inequality, we have
	\begin{equation}\label{eq_avg_cons}
		\frac{\eta}{I}\frac{1}{K}\sum_{k=1}^{K}\sum_{h=1}^{H}\sum_{i=1}^{I}\mathbf{E}\big[g_{i,\xi}^-(s_h^k,\pi_h^k(s_h^k))\big]\leq \frac{\eta}{I}\sum_{h=1}^{H}\sum_{i=1}^{I}\mathbf{E}\big[g_{i,\xi}^-(s_h,\bar{\pi}_h(s_h))\big]
	\end{equation}
	Combining \eqref{eq_bound_violation} with Eq. \eqref{eq_avg_value} and \eqref{eq_avg_cons}, we have
	\begin{equation}
		V_{1,\xi}^*(s_1)-V_1^{\bar{\pi}}(s_1)-\frac{\eta}{I}\sum_{h=1}^{H}\sum_{i=1}^{I}\mathbf{E}\big[g_{i,\xi}^-(s_h,\bar{\pi}_h(s_h))\big]\leq O(\frac{\eta}{K}\sqrt{H^3SATl})+\eta H\xi
	\end{equation}
	Notice that $\eta=\frac{2HI}{\gamma}$ satisfies the condition that $C^*\geq 2Y^*$ in Lemma \ref{thm.violationgeneral}, thus,
	\begin{equation}\label{eq_bound_violation_final}
		\sum_{h=1}^{H}\sum_{i=1}^{I}\mathbf{E}\bigg|g_{i,\xi}^-(s_h,\bar{\pi}_h(s_h))\bigg|\leq O(\frac{I}{K}\sqrt{H^3SATl})+2HI\xi=O(I\sqrt{\frac{H^4SA\ell}{K})}+2HI\xi
	\end{equation}
\subsection{$\epsilon$-Optimal Policy}
	Choosing $K=\Omega(\frac{I^2H^6SA\ell}{\epsilon^2})$, by Eq. \eqref{eq_bound_regret_final} and \eqref{eq_bound_violation_final}, we have
	\begin{equation}
		\begin{aligned}
		V_1^*(s_1)-V_1^{\bar{\pi}}(s_1)&\leq \epsilon\\
		\sum_{h=1}^{H}\sum_{i=1}^{I}\mathbf{E}\bigg|g_i^-(s_h,\bar{\pi}_h(s_h))\bigg|&\leq \frac{\epsilon}{H}+2HI\xi\\
		\end{aligned}
	\end{equation}
	Recalling the definition of $g_{i,\xi}=f_i^{-}(s,a)+\xi$, we have
	\begin{equation}
		\sum_{h=1}^{H}\sum_{i=1}^{I}\mathbf{E}\bigg|f_i^-(s_h,\bar{\pi}_h(s_h))\bigg|\leq \frac{\epsilon}{H}+3HI\xi
	\end{equation}
	Since $\xi = \frac{\epsilon}{6HI}$, the above shows that we obtain an $\epsilon$-optimal strategy.

\section{Proof of Theorem \ref{cor_deterministic}}\label{app_slow_convergence}
\begin{proof}
	Firstly, let $\xi=0$. Notice that this will break the Slater condition in Lemma \ref{lem_slater}. However, the following proof is not based on the Slater Condition. Moreover, by the choice of $\eta=T^\phi$ and $\phi\in(0,\frac{1}{2})$. The bound for the modified reward function in Lemma \ref{lem_bound_R} holds without any requirement and thus the sub-linear convergence of the \textbf{Modified MDP} in Lemma \ref{lem_sublienar} directly follows. Finally, due to the choice of $\xi=0$, we can simplify the notation $W_{1,\xi}^*\rightarrow W_1^*,W_{1,\xi}^{\pi^k}\rightarrow W_{1}^{\pi^k},g_{i,\xi}^{-}\rightarrow g_{i}^{-}$. With this simplified notation, Lemma \ref{lem_opt_origin} and \ref{lem_policy} hold directly without any change besides notation. Thus, similar to Eq. \eqref{eq_bound_regret}, we have
	\begin{equation}
		\sum_{k=1}^{K}[V_{1}^*(s_1)-V_1^{\pi^k}(s_1)]-\frac{\eta}{I}\sum_{k=1}^{K}\sum_{h=1}^{H}\sum_{i=1}^{I}\mathbf{E}\big[g_i^-(s_h^k,\pi_h^k(s_h^k))\big]\leq \sum_{k=1}^{K}W_{1}^*(s_1)-W_{1}^{\pi^k}(s_1)\leq O(\eta\sqrt{H^3SATl})
	\end{equation}
	Due to the the sum of terms $g_{i}^{-}$ non-positive, we have
	\begin{equation}\label{eq_regret}
		\sum_{k=1}^{K}[V_{1}^*(s_1)-V_1^{\pi^k}(s_1)]\leq O(\eta\sqrt{H^3SATl})=O(T^{\frac{1}{2}+\phi}\sqrt{H^3SA\ell})
	\end{equation}
	Furthermore, notice that $\sum_{k=1}^{K}[V_{1}^*(s_1)-V_1^{\pi^k}(s_1)]\geq -KH=-T$, we have
	\begin{equation}\label{eq_vio}
		\begin{aligned}
		-\frac{\eta}{I}\sum_{k=1}^{K}\sum_{h=1}^{H}\sum_{i=1}^{I}\mathbf{E}\big[g_i^-(s_h^k,\pi_h^k(s_h^k))\big]&\leq O(\eta\sqrt{H^3SATl})+O(T)\\
		\sum_{k=1}^{K}\sum_{h=1}^{H}\sum_{i=1}^{I}\mathbf{E}\big|g_i^-(s_h^k,\pi_h^k(s_h^k))\big|&\leq O(I\sqrt{H^3SATl})+O(\frac{IT}{\eta})=O(IT^{1-\phi})
		\end{aligned}
	\end{equation}
	Due to $\phi\in(0,\frac{1}{2})$, it is obvious that the Eq. \eqref{eq_regret} and \eqref{eq_vio} are both sub-linear, which means that the policy $\pi^k$ in Algorithm \ref{alg:cons_q} converge to the optimal policy $\pi^*$ and the constraint violation of $\pi^k$ converges to 0. 
\end{proof}

\section{Supporting Lemmas from Optimization}\label{app.sec.opt}
	We collect some standard results from the literature for readers' convenience. The following lemmas are mainly from \citep{ding2020provably}, while are extended  to the peak constraint setting rather than average constraint in the prior work. First, we rewrite our constrained problem~\eqref{eq_new_formulation} as,
	\begin{equation}
		\begin{array}{c}
		\max\limits_{\pi}
		\;
		\mathbf{E}\bigg[\sum_{h=1}^{H}r_h(s_h,\pi_h(s_h))\bigg]
		\;\;
		s.t 
		\;\;
		\mathbf{E}\big[g_{i,\xi}(s_h,\pi_h(s_h))\big]\geq 0 \quad\forall h,i
		\end{array}
	\end{equation}
	Let the optimal solution be $\pi^*$ such that
	\begin{equation}
		V_{1,\xi}^{\pi^*}(s_1)
		\;=\;
		\max_{\pi}\;\{ \,V_{1}^{\pi}(s_1) \,\vert\,\mathbf{E}\big[g_{i,\xi}(s_h,\pi_h(s_h))\big]\geq 0 \quad\forall h,i\}.
	\end{equation}
	Let the Lagrangian be
	$\mathcal{L}_\xi(\pi,\bm{\lambda}) := V_{1}^{\pi}(s_1)+ \sum_{h=1}^{H}\sum_{i=1}^{I}\lambda_{h,i}\mathbf{E}\big[g_{i,\xi}(s_h,\pi_h(s_h))\big]$, where $\lambda_{h,i}\geq 0$ is the Lagrange multipliers or dual variables. The associated dual function is defined as 
	\begin{equation}
		\mathcal{D}_{\xi}(\bm{\lambda}):=\max_{\pi}\mathcal{L}_{\xi}(\pi,\bm{\lambda}):= V_{1}^{\pi}(s_1)+ \sum_{h=1}^{H}\sum_{i=1}^{I}\lambda_{h,i}\mathbf{E}\big[g_{i,\xi}(s_h,\pi_h(s_h))\big]
	\end{equation}
	and the optimal dual is $\bm{\lambda}^*:=\arg\min_{\bm{\lambda}\geq 0}\mathcal{D}_{\xi}(\bm{\lambda})$,
	\begin{equation}
		\mathcal{D}_{\xi}(\bm{\lambda}^*):=\min_{\bm{\lambda}\geq 0}\mathcal{D}_{\xi}(\bm{\lambda})
	\end{equation}
	We recall that the problem~\eqref{eq_new_formulation} enjoys strong duality under the standard Slater condition. The proof is a special case of \cite{ding2020provably} in finite-horizon for peak constraint.

\begin{assumption}[Slater Condition]
	There exists $\gamma_{h,i}>0,\forall h,i$ and $\hat{\pi}$ such that $\mathbf{E}\big[g_{i,\xi}(s_h,\hat{\pi}_h(s_h))\big] \geq \gamma_{h,i}$.
\end{assumption}

\begin{lemma}\label{lem_duality}[Strong Duality]\cite{ding2020provably}
	If the Slater condition holds, then the strong duality holds for Eq. \eqref{eq_new_formulation}, 
	\[
	V_{1,\xi}^{\pi^*}(s_1) \;= \; \mathcal{D}_\xi(\bm{\lambda}^*) .
	\]
\end{lemma}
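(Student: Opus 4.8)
The plan is to establish strong duality by lifting the constrained problem~\eqref{eq_new_formulation} from policy space to the space of occupancy measures, where the objective and all constraints become linear, and then invoking the standard Slater-based duality theorem for convex programs. The central difficulty is that $V_1^{\pi}(s_1)$ is \emph{not} concave in $\pi$ when $\pi$ is viewed as a collection of conditional distributions $\pi_h(\cdot\mid s)$, since the induced state distribution depends multiplicatively on the policies across the $H$ steps; hence convex duality cannot be applied directly in policy space. This non-convexity is the main obstacle, and the occupancy-measure reformulation is the device that removes it.

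First I would introduce, for each step $h$, the occupancy measure $d_h^{\pi}(s,a):=\Pr(s_h=s,\,a_h=a\mid \pi, s_1)$, so that both the objective and the constraints become \emph{linear} functionals of the collection $d^{\pi}=(d_h^{\pi})_{h\in[H]}$: one has $V_1^{\pi}(s_1)=\sum_{h=1}^{H}\sum_{s,a} d_h^{\pi}(s,a)\,r(s,a)$ and $\mathbf{E}[g_{i,\xi}(s_h,\pi_h(s_h))]=\sum_{s,a} d_h^{\pi}(s,a)\,g_{i,\xi}(s,a)$. Next I would characterise the set $\mathcal{D}$ of achievable occupancy measures as the convex polytope cut out by the Bellman flow constraints $\sum_a d_{h+1}(s',a)=\sum_{s,a}\mathbb{P}_h(s'\mid s,a)\,d_h(s,a)$ together with $\sum_a d_1(s,a)=\mathbf{1}[s=s_1]$ and $d_h(s,a)\ge 0$, and I would verify the standard bijection between policies and elements of $\mathcal{D}$ via $\pi_{d,h}(a\mid s)=d_h(s,a)/\sum_{a'}d_h(s,a')$ on visited states. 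This turns~\eqref{eq_new_formulation} into the linear program $\max_{d\in\mathcal{D}}\langle r,d\rangle$ subject to the $HI$ linear inequalities $\langle g_{i,\xi}^{(h)},d\rangle\ge 0$, where $g_{i,\xi}^{(h)}$ denotes the $(h,i)$ constraint read off the step-$h$ block of $d$.

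With the problem in linear-program form, the key observation is that the Lagrangian is unchanged by the lift: since $\mathcal{L}_\xi(\pi,\bm{\lambda})$ is an affine functional of $d^{\pi}$ and $\{d^{\pi}\}=\mathcal{D}$, we have $\mathcal{D}_\xi(\bm{\lambda})=\max_\pi\mathcal{L}_\xi(\pi,\bm{\lambda})=\max_{d\in\mathcal{D}}\big(\langle r,d\rangle+\sum_{h,i}\lambda_{h,i}\langle g_{i,\xi}^{(h)},d\rangle\big)$, which is exactly the dual function of the linear program. The Slater assumption supplies a strictly feasible $d^{\bar{\pi}}\in\mathcal{D}$ with $\langle g_{i,\xi}^{(h)},d^{\bar{\pi}}\rangle\ge\gamma_{h,i}>0$, while the reward bound (Assumption~\ref{ass_bounded}) makes the primal value finite and compactness of the product-of-simplices policy space (equivalently of $\mathcal{D}$) ensures the primal maximum is attained. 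I would then invoke the Slater-condition strong-duality theorem for convex programs to obtain $V_{1,\xi}^{\pi^*}(s_1)=\max_{d\in\mathcal{D},\,\langle g_{i,\xi}^{(h)},d\rangle\ge0}\langle r,d\rangle=\min_{\bm{\lambda}\ge0}\mathcal{D}_\xi(\bm{\lambda})=\mathcal{D}_\xi(\bm{\lambda}^*)$, where existence of the minimiser $\bm{\lambda}^*$ again follows from Slater (which bounds the dual sublevel sets). This is precisely the claimed equality, and it specialises the average-constraint argument of~\cite[Lemma 7]{ding2020provably} to the per-step peak constraints by treating each pair $(h,i)$ as a separate linear constraint.
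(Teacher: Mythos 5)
Your proof is correct, and it is essentially the argument the paper relies on: the paper gives no self-contained proof of this lemma, deferring entirely to \cite[Lemma 7]{ding2020provably} with the remark that the present statement is its finite-horizon, peak-constraint special case. Your occupancy-measure reformulation --- casting the problem as a linear program over the Bellman-flow polytope, noting the Lagrangian and dual function are unchanged by the lift, and invoking Slater-based strong duality with attainment of the dual optimum --- is precisely the standard machinery behind that cited result, specialized to the $HI$ per-step constraints, so there is nothing to flag.
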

\begin{proof}
	This is an extension of Lemma 7 in \cite{ding2020provably} to the peak constraint case. To prove the strong duality still holds for peak constraint, we construct an equivalent problem to Eq. \eqref{eq_new_formulation} in the average constraint setting. Consider, for each fixed $h\in[H]$ and $i\in[I]$, we define a new function $\tilde{g}_{h',i,\xi}$ in the following way.
	\begin{equation}
		\tilde{g}_{h',i,\xi}(s,a)=\left\{
		\begin{aligned}
		&g_{i,\xi}(s,a) \quad \text{if }h'=h  \\
		&0 \quad \text{otherwise}
		\end{aligned}
		\right.
	\end{equation}
	By this way, the original constraint function can be written as
	\begin{equation}
		\mathbf{E}[g_{i,\xi}(s_h,\pi_h(s_h))]=\mathbf{E}\bigg[\sum_{h'=1}^{H}\tilde{g}_{h',i,\xi}(s_{h'},\pi_{h'}(a_{h'}))\bigg]=:V_{1,\tilde{g}_{h',i,\xi}}^{\pi}(s_1)
	\end{equation}
	Thus, an equivalent problem to Eq. \eqref{eq_new_formulation} is
	\begin{equation}\label{eq:equivalent_problem}
		\begin{array}{c}
		\max\limits_{\pi}
		\;
		\mathbf{E}\bigg[\sum_{h=1}^{H}r_h(s_h,\pi_h(s_h))\bigg]
		\;\;
		s.t 
		\;\;
		V_{1,\tilde{g}_{h',i,\xi}}^{\pi}(s_1)\geq 0 \quad\forall h',i
		\end{array}
	\end{equation}
	It can be seen that Eq. \eqref{eq:equivalent_problem} is the MDP problem with averaged constraints which is the same setting as \cite{ding2020provably}. The strong duality holds for Eq. \eqref{eq:equivalent_problem} and thus holds for the Eq. \eqref{eq_new_formulation}
\end{proof}
It is implied by the strong duality that the optimal solution to the dual problem: $\min_{\bm{\lambda}\geq 0}\mathcal{D}(\bm{\lambda},\xi)$ is obtained at $\lambda^*$. Denote the set of all optimal dual variables as $\Lambda^*$.

Under the Slater condition, an useful property of the dual variable is that the sub-level sets are bounded \cite{ding2020provably}. 

\begin{lemma}[Boundedness of Sublevel Sets of the Dual Function]
	Let the Slater condition hold.
	Fix $C\in\mathbb{R}$. For any $\bm{\lambda} \in\{\bm{\lambda}\geq 0\,\vert\, \mathcal{D}_{\xi}(\bm{\lambda}) \leq C \}$, define $Y:=\max_{h,i}\{\lambda_{h,i}\}$ and $\gamma=\min_{h,i}\gamma_{h,i}$, it holds that 
	\[
	Y\leq\frac{1}{\gamma}\bigg[C -V_{1}^{\hat{\pi}}(s_1)\bigg].
	\]
\end{lemma}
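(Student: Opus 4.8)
The plan is to use the Slater-feasible policy $\bar{\pi}$ as a certificate that lower-bounds the dual function, and then to isolate $Y$ using the non-negativity of the multipliers together with the uniform positivity of the Slater slack. The whole argument is a short duality estimate, closely mirroring \cite[Lemma~9]{ding2020provably} but phrased for the peak-constraint Lagrangian introduced above.

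First I would note that the dual function is defined as a maximization over policies, $\mathcal{D}_{\xi}(\bm{\lambda}) = \max_{\pi}\mathcal{L}_{\xi}(\pi,\bm{\lambda})$, so it dominates the Lagrangian evaluated at any single policy. Evaluating at $\bar{\pi}$ and invoking the hypothesis $\mathcal{D}_{\xi}(\bm{\lambda})\leq C$, I obtain
\[
C \;\geq\; \mathcal{D}_{\xi}(\bm{\lambda}) \;\geq\; \mathcal{L}_{\xi}(\bar{\pi},\bm{\lambda}) \;=\; V_{1}^{\bar{\pi}}(s_1) + \sum_{h=1}^{H}\sum_{i=1}^{I}\lambda_{h,i}\,\mathbf{E}\big[g_{i,\xi}(s_h,\bar{\pi}_h(s_h))\big].
\]

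Second, I would apply the Slater condition, which supplies $\mathbf{E}[g_{i,\xi}(s_h,\bar{\pi}_h(s_h))]\geq \gamma_{h,i}\geq \gamma$ for every $(h,i)$. Because each $\lambda_{h,i}\geq 0$, every summand is bounded below by $\gamma\lambda_{h,i}$, so the double sum is at least $\gamma\sum_{h,i}\lambda_{h,i}=\gamma Y$. Substituting this into the previous display gives $C \geq V_{1}^{\bar{\pi}}(s_1)+\gamma Y$, and dividing by $\gamma>0$ yields the asserted bound $Y\leq \frac{1}{\gamma}\big[C-V_{1}^{\bar{\pi}}(s_1)\big]$.

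There is no genuine obstacle here; the only point requiring care is the direction of the inequalities. It is precisely the non-negativity of the multipliers that lets the positive Slater slack push the Lagrangian \emph{upward}, and it is essential that this slack is uniformly bounded below by $\gamma=\min_{h,i}\gamma_{h,i}$ rather than merely positive termwise, since otherwise $Y$ could not be controlled uniformly. I would also verify that $\bar{\pi}$ is exactly the policy furnished by the Slater assumption and that the constrained quantity is $g_{i,\xi}$, so that the slack used is the one appearing in that assumption.
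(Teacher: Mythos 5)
Your proof is correct and is essentially identical to the paper's own argument: both evaluate the dual function at the Slater point $\bar{\pi}$ to get $C \geq \mathcal{D}_{\xi}(\bm{\lambda}) \geq V_1^{\bar{\pi}}(s_1) + \sum_{h,i}\lambda_{h,i}\mathbf{E}[g_{i,\xi}(s_h,\bar{\pi}_h(s_h))] \geq V_1^{\bar{\pi}}(s_1) + \gamma Y$, then divide by $\gamma>0$. Your write-up is in fact slightly more careful, as it spells out the role of $\lambda_{h,i}\geq 0$ and the uniform lower bound $\gamma = \min_{h,i}\gamma_{h,i}$, which the paper leaves implicit.
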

\begin{proof}
	\begin{equation}
	C\geq\mathcal{D}_\xi(\bm{\lambda})\geq V_{1}^{\hat{\pi}}(s_1) + \sum_{h=1}^{H}\sum_{i=1}^{I}\lambda_{h,i}\mathbf{E}\big[g_{i,\xi}(s_h,\hat{\pi}_h(s_h))\big]\geq V_{r}^{\hat{\pi}}(s_1) + Y\gamma
	\end{equation}
	where we utilize the Slater point $\hat{\pi}$ and $Y\leq \sum_{h,i}\lambda_{h,i}$ in the last inequality. We complete the proof by noting $\gamma>0$.
\end{proof}

\begin{corollary}[Boundedness of $Y^\star$]
	\label{cor.boundeddual}
	Let the Slater condition hold, if we take $C = V_{1,\xi}^{\pi^\star}(x_1)$ and by Strong Duality $C=\mathcal{D}_{\xi}(\bm{\lambda}^*)$, then $\Lambda^\star=\{\bm{\lambda}\geq 0\,\vert\, \mathcal{D}_{\xi}(\bm{\lambda})\leq C \}$. Thus, for any $\bm{\lambda}\in\Lambda^\star$, define $Y^*:=\max_{h,i}\{\lambda_{h,i}\}$
	\[
	Y^*\leq\frac{1 }{\gamma}\bigg[V_{1,\xi}^{\pi^*}(s_1)-V_1^{\hat{\pi}} (s_1)\bigg]\leq \frac{H}{\gamma}.
	\]
\end{corollary}
Another useful theorem from the optimization is given as follows. It describes that the constraint violation $\sum_{h=1}^{H}\sum_{i=1}^{I} \mathbf{E}|g_{i,\xi}^-(s_h,\pi_h(s_h))|$ can be bounded if we have some weak bound. We next state and prove it for our problem, which is used in our constraint violation analysis in Appendix \ref{app_thm1}.
\begin{lemma}[Constraint Violation]
	\label{thm.violationgeneral}
	Let the Slater condition hold and $\bm{\lambda}^\star \in \Lambda^\star$. If $C^\star \geq 2 Y^\star$ and for any $\tilde{\pi}$ we have
	\begin{equation}
	V_{1,\xi}^{\pi^*}(s_1)-V_1^{\tilde{\pi}}(s_1)-C^\star\sum_{h=1}^{H}\sum_{i=1}^{I}\mathbf{E}\big[g_{i,\xi}^-(s_h,\tilde{\pi}_h(s_h))\big]\leq\delta
	\end{equation}
	Then,
	\begin{equation}
	\sum_{h=1}^{H}\sum_{i=1}^{I}\mathbf{E}\bigg|g_{i,\xi}^-(s_h,\tilde{\pi}_h(s_h))\bigg|\leq\frac{2\delta}{C^*}
	\end{equation} 
	where $x^{-}=\min(x,0)$. 
\end{lemma}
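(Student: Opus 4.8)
The plan is to convert the hypothesis into a statement about the total violation and the optimality gap, and then to supply a matching lower bound on the optimality gap via strong duality. First I would record the elementary fact that $g_{i,\xi}^-\le 0$ pointwise, so $\mathbf{E}|g_{i,\xi}^-| = -\mathbf{E}[g_{i,\xi}^-]$. Writing $V := \sum_{h=1}^{H}\sum_{i=1}^{I}\mathbf{E}|g_{i,\xi}^-(s_h,\bar{\pi}_h(s_h))|\ge 0$ for the total violation and $\Delta := V_{1,\xi}^{\pi^*}(s_1)-V_1^{\bar{\pi}}(s_1)$ for the optimality gap, the hypothesis reads exactly $\Delta + C^\star V \le \delta$, and the target inequality becomes $V\le 2\delta/C^\star$. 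Hence it suffices to lower bound $\Delta$ by a quantity of the form $-Y^\star V$.

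To obtain that lower bound I would invoke the Strong Duality lemma together with the definition of the dual function. Since $\mathcal{D}_\xi(\bm{\lambda}^\star)=\max_\pi \mathcal{L}_\xi(\pi,\bm{\lambda}^\star)\ge \mathcal{L}_\xi(\bar{\pi},\bm{\lambda}^\star)$ and, by strong duality, $\mathcal{D}_\xi(\bm{\lambda}^\star)=V_{1,\xi}^{\pi^*}(s_1)$, evaluating the Lagrangian at $\bar{\pi}$ gives
\[
V_{1,\xi}^{\pi^*}(s_1)\;\ge\; V_1^{\bar{\pi}}(s_1)+\sum_{h=1}^{H}\sum_{i=1}^{I}\lambda_{h,i}^\star\,\mathbf{E}\big[g_{i,\xi}(s_h,\bar{\pi}_h(s_h))\big],
\]
that is, $\Delta \ge \sum_{h,i}\lambda_{h,i}^\star\,\mathbf{E}[g_{i,\xi}]$.

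Next I would discard the nonnegative part of $g_{i,\xi}$: since $g_{i,\xi}\ge g_{i,\xi}^-$ and $\lambda_{h,i}^\star\ge 0$, we have $\sum_{h,i}\lambda_{h,i}^\star\mathbf{E}[g_{i,\xi}]\ge \sum_{h,i}\lambda_{h,i}^\star\mathbf{E}[g_{i,\xi}^-]$. Using $\mathbf{E}[g_{i,\xi}^-]=-\mathbf{E}|g_{i,\xi}^-|\le 0$ together with the crude but sufficient bound $\lambda_{h,i}^\star\le \sum_{h',i'}\lambda_{h',i'}^\star = Y^\star$, this last sum is at least $-Y^\star\sum_{h,i}\mathbf{E}|g_{i,\xi}^-| = -Y^\star V$, so $\Delta \ge -Y^\star V$. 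Substituting into $\Delta + C^\star V\le \delta$ yields $(C^\star-Y^\star)V\le \delta$; since $C^\star\ge 2Y^\star$ forces $C^\star-Y^\star\ge C^\star/2$, I conclude $\tfrac{1}{2}C^\star V\le \delta$, i.e.\ $V\le 2\delta/C^\star$, which is the claim.

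The only delicate points are bookkeeping rather than depth: keeping the direction of every inequality consistent with the sign of $g^-$ (each relaxation must weaken, never strengthen, the lower bound on $\Delta$), and passing from the dual-weighted violation $\sum_{h,i}\lambda_{h,i}^\star\mathbf{E}|g_{i,\xi}^-|$ to the unweighted total $V$ through $\max_{h,i}\lambda_{h,i}^\star\le Y^\star$. I expect the main (and still minor) obstacle to be stating the Lagrangian inequality cleanly, namely that $\mathcal{L}_\xi(\bar{\pi},\bm{\lambda}^\star)\le \mathcal{D}_\xi(\bm{\lambda}^\star)$ and that strong duality identifies $\mathcal{D}_\xi(\bm{\lambda}^\star)$ with $V_{1,\xi}^{\pi^*}(s_1)$; once that is in place the remainder is a one-line sign-tracking computation.
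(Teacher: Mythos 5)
Your proof is correct, and it takes a more direct route than the paper's. Both arguments ultimately reduce to the same key inequality, namely $V_{1,\xi}^{\pi^*}(s_1)-V_1^{\bar{\pi}}(s_1)\geq -Y^\star\sum_{h,i}\mathbf{E}\big|g_{i,\xi}^-(s_h,\bar{\pi}_h(s_h))\big|$, after which the algebra with $C^\star\geq 2Y^\star$ is identical. The difference is how that inequality is obtained. The paper introduces the perturbed value function $v_\xi(\tau)=\max_\pi\{V_1^\pi(s_1)\,\vert\,\mathbf{E}[g_{i,\xi}(s_h,\pi_h(s_h))]\geq\tau\ \forall h,i\}$, proves the sensitivity bound $v_\xi(\tau)\leq v_\xi(0)-\tau Y^\star$, and then uses that $\bar{\pi}$ is feasible for the relaxed problem at level $\tilde{\tau}=\sum_{h,i}\mathbf{E}[g_{i,\xi}^-]\leq 0$, so that $V_1^{\bar{\pi}}(s_1)\leq v_\xi(\tilde{\tau})$. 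You instead evaluate the Lagrangian directly at $\bar{\pi}$: strong duality gives $\mathcal{L}_\xi(\bar{\pi},\bm{\lambda}^\star)\leq\mathcal{D}_\xi(\bm{\lambda}^\star)=V_{1,\xi}^{\pi^*}(s_1)$, and then you pass from the dual-weighted term to the unweighted violation via $g_{i,\xi}\geq g_{i,\xi}^-$ and the termwise bound $\lambda_{h,i}^\star\leq Y^\star$ (valid since the multipliers are nonnegative), where the paper's route instead uses that each $\mathbf{E}[g_{i,\xi}]$ dominates the aggregate $\tilde{\tau}$. Your version is shorter and, as a bonus, sidesteps a sloppy link in the paper's chain: the paper writes $V_1^{\bar{\pi}}(s_1)\leq V_{1,\xi}^{\pi^*}(s_1)=v_\xi(0)\leq v_\xi(\tilde{\tau})$, whose first inequality need not hold (an infeasible $\bar{\pi}$ can out-perform the optimal feasible policy); only the endpoint $V_1^{\bar{\pi}}(s_1)\leq v_\xi(\tilde{\tau})$ is actually true and needed, and your argument never relies on the faulty intermediate step. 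What the paper's perturbation-function formalism buys in exchange is the standard sensitivity-analysis template (inherited from the cited prior work), which isolates the statement "the optimal value degrades at most linearly in the constraint relaxation with slope $Y^\star$" as a reusable fact, independent of the particular policy $\bar{\pi}$ under consideration.
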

\begin{proof}
	Define $\bm{\tau}:=\{\tau_{h,i}\}_{h\in[H],i\in[I]}$ and let 
	\begin{equation}
	v_\xi(\bm{\tau}) \triangleq \max_{\pi}\{V_1^{\pi}(s_1)\vert \mathbf{E}\big[g_{i,\xi}(s_h,\pi_h(s_h))\big]\geq\tau_{h,i} \quad\forall h,i\}.
	\end{equation}
	By definition, $v_\xi(\bm{0}) = V_{1,\xi}^{\pi^*}(s_1)$. 
	By the Lagrangian and the strong duality,
	\begin{equation}
	\mathcal{L}_\xi(\pi,\bm{\lambda}^*)\leq\max_{\pi} \mathcal{L}_\xi(\pi,\bm{\lambda}^*)  \;=\; \mathcal{D}_\xi(\bm{\lambda}^*) =V_{1,\xi}^{\pi^*}(s_1)=v_\xi(\bm{0}) 
	\end{equation}
	For any $\tau_{h,i}\in\mathbb{R}$ and $\pi\in\{\pi\vert \mathbf{E}\big[g_{i,\xi}(s_h,\pi_h(s_h))\big]\geq\tau_{h,i} \quad \forall h,i\}$, we have 
	\[
	\begin{array}{rcl}
	v_{\xi}(\bm{0}) - \sum_{h=1}^{H}\sum_{i=1}^{I}\tau_{h,i} \lambda_{h,i}^* &\geq& \mathcal{L}_\xi(\pi,\bm{\lambda}^\star) - \sum_{h=1}^{H}\sum_{i=1}^{I}\tau_{h,i} \lambda_{h,i}^*
	\\[0.2cm]
	&=& V_1^{\pi}(s_1) +\sum_{h=1}^{H}\sum_{i=1}^{I}\lambda_{h,i}^*\mathbf{E}\big[g_{i,\xi}(s_h,\pi_h(s_h))\big] - \sum_{h=1}^{H}\sum_{i=1}^{I}\tau_{h,i} \lambda_{h,i}^*
	\\[0.2cm]
	&=&V_1^{\pi}(s_1) +\sum_{h=1}^{H}\sum_{i=1}^{I}\lambda_{h,i}^*[\mathbf{E}\big[g_{i,\xi}(s_h,\pi_h(s_h))\big] - \tau_{h,i}]
	\\[0.2cm]
	&\geq& V_1^{\pi}(s_1).
	\end{array}
	\]
	If we maximize the right-hand side of above inequality over $\{\pi\vert \mathbf{E}\big[g_{i,\xi}(s_h,\pi_h(s_h))\big]\geq\tau_{h,i} \quad \forall h,i\}$,  we have
	\begin{equation}
	v_\xi(\bm{0}) - \sum_{h=1}^{H}\sum_{i=1}^{I}\tau_{h,i} \lambda_{h,i}^*\geq v_\xi(\bm{\tau})\label{vzeta}
	\end{equation}
	On the other hand, if we define $\bm{\tilde{\tau}}=\{\tilde{\tau}_{h,i}\}$ and $\tilde{\tau}_{h,i}:= \mathbf{E}\big[g_{i,\xi}^-(s_h,\tilde{\pi}_h(s_h))\big]$, notice that the policy $\tilde{\pi}\in\{\pi\vert \mathbf{E}\big[g_{i,\xi}(s_h,\pi_h(s_h))\big]\geq\tilde{\tau}_{h,i}, \forall h,i\}$. Thus,
	\begin{equation}
	V_1^{\tilde{\pi}}(s_1)\leq v_\xi(\tilde{\bm{\tau}}).\label{V1ineq}
	\end{equation}
	Combining \eqref{vzeta} and \eqref{V1ineq}, we have
	\begin{equation}\label{eq_bound_exceed}
	V_1^{\tilde{\pi}}(s_1)-V_{1,\xi}^{\pi^*}(s_1)\leq v_{\xi}(\tilde{\bm{\tau}})-v_{\xi}(\bm{0}) \leq - \sum_{h=1}^{H}\sum_{i=1}^{I}\tilde{\tau}_{h,i} \lambda_{h,i}^*
	\end{equation}
	By the assumption in the statement of the lemma, we have
	\begin{equation}
	V_{1,\xi}^{\pi^*}(s_1)-V_1^{\tilde{\pi}}(s_1)-C^\star\sum_{h=1}^{H}\sum_{i=1}^{I}\mathbf{E}\big[g_{i,\xi}^-(s_h,\tilde{\pi}_h(s_h))\big] \leq\delta
	\end{equation}
	Combined with Eq. \eqref{eq_bound_exceed}, we have
	\begin{equation}
	-C^\star\sum_{h=1}^{H}\sum_{i=1}^{I}\mathbf{E}\big[g_{i,\xi}^-(s_h,\tilde{\pi}_h(s_h))\big]\leq\delta- \sum_{h=1}^{H}\sum_{i=1}^{I}\tilde{\tau}_{h,i} \lambda_{h,i}^*
	\end{equation}
	Recalling the definition of $\tilde{\tau}_{h,i}=\mathbf{E}\big[g_{i,\xi}^-(s_h,\tilde{\pi}_h(s_h))\big]$ above Eq. \eqref{V1ineq}, we have
	\begin{equation}
	\sum_{h=1}^{H}\sum_{i=1}^{I}(\lambda_{h,i}^*-C^*)\mathbf{E}\big[g_{i,\xi}^-(s_h,\tilde{\pi}_h(s_h))\big]\leq\delta
	\end{equation}
	Due to $g_{i,\xi}^-(s_h,\tilde{\pi}_h(s_h))\leq 0$, we have
	\begin{equation}
		\sum_{h=1}^{H}\sum_{i=1}^{I}(C^*-\lambda_{h,i}^*)\mathbf{E}\big| g_{i,\xi}^-(s_h,\tilde{\pi}_h(s_h))\big|\leq\delta
	\end{equation}
	Finally, due to $C\geq 2Y^*=2\max_{h,i}\lambda_{h,i}^*$, we have
	\begin{equation}
	\sum_{h=1}^{H}\sum_{i=1}^{I}\mathbf{E}\bigg| g_{i,\xi}^-(s_h,\tilde{\pi}_h(s_h))\bigg|\leq\frac{2\delta}{C^*}
	\end{equation}
\end{proof}

\bibliography{jmlr_ref}

\end{document}